\providecommand{\Div}{\operatorname{div}}          
\providecommand*{\Dist}[2]{\operatorname{dist}({#1};{#2})}   
\providecommand*{\Dist}[2]{\Dist{#1}{#2}}
\providecommand{\esssup}{\operatorname*{esssup}}
\newcommand{\Bd}{{\boldsymbol{d}}}
\newcommand{\Bn}{{\boldsymbol{n}}}
\newcommand{\Bp}{{\boldsymbol{p}}}
\newcommand{\Bw}{{\boldsymbol{w}}}
\newcommand{\Bx}{{\boldsymbol{x}}}
\newcommand{\By}{{\boldsymbol{y}}}
\newcommand{\BC}{{\boldsymbol{C}}}
\newcommand{\BL}{{\boldsymbol{L}}}
\newcommand{\BW}{{\boldsymbol{W}}}
\newcommand{\BX}{{\boldsymbol{X}}}
\newcommand{\alphabf}{\boldsymbol{\alpha}}
\newcommand{\betabf}{\boldsymbol{\beta}}
\newcommand{\gammabf}{\boldsymbol{\gamma}}
\newcommand{\zetabf}{\boldsymbol{\zeta}}
\newcommand{\chibf}{\boldsymbol{\chi}}
\newcommand{\Ce}{\mathcal{E}}
\newcommand{\Ci}{\mathcal{I}}
\newcommand{\Cl}{\mathcal{L}}
\newcommand{\Cp}{\mathcal{P}}
\newcommand{\Ct}{\mathcal{T}}
\newcommand{\bbG}{\mathbb{G}}
\newcommand{\bbI}{\mathbb{I}}
\newcommand{\bbJ}{\mathbb{J}}
\newcommand{\bbL}{\mathbb{L}}
\newcommand{\bbR}{\mathbb{R}}
\newcommand{\bbW}{\mathbb{W}}
\newcommand*{\N}[1]{\left\|{#1}\right\|}     
\newcommand*{\dN}[1]{\big\|{#1}\big\|}     
\newcommand*{\tN}[1]{\big\|\hspace{-0.09em}\big|{#1}\big|\hspace{-0.09em}\big\|}     
\newcommand*{\TN}[1]{\left\|\hspace{-0.09em}\left|{#1}\right|\hspace{-0.09em}\right\|}     
\newcommand*{\SN}[1]{\left|{#1}\right|}      
\newcommand*{\Lp}[2][\defaultdomain]{L^{#2}({#1})}
\newcommand*{\Lpv}[2][\defaultdomain]{\BL^{#2}({#1})}
\newcommand*{\NLp}[3][\defaultdomain]{\N{#2}_{\Lp[#1]{#3}}}
\newcommand*{\NLpv}[3][\defaultdomain]{\N{#2}_{\Lpv[#1]{#3}}}
\newcommand*{\Ltwo}[1][\defaultdomain]{\Lp[#1]{2}}
\newcommand*{\Ltwov}[1][\defaultdomain]{\Lpv[#1]{2}}
\newcommand*{\NLtwo}[2][\defaultdomain]{\NLp[#1]{#2}{2}}
\newcommand*{\NLtwov}[2][\defaultdomain]{\NLpv[#1]{#2}{2}}
\newcommand*{\Linf}[1][\defaultdomain]{L^{\infty}({#1})}
\newcommand*{\Linfv}[1][\defaultdomain]{\BL^{\infty}({#1})}
\newcommand*{\NLinf}[2][\defaultdomain]{\N{#2}_{\Linf[{#1}]}}
\newcommand*{\NLinfv}[2][\defaultdomain]{\N{#2}_{\Linfv[{#1}]}}
\newcommand*{\Hm}[2][\defaultdomain]{H^{#2}({#1})}
\newcommand*{\Hone}[1][\defaultdomain]{\Hm[#1]{1}}
\newcommand*{\NHone}[2][\defaultdomain]{{\N{#2}}_{\Hone[{#1}]}}
\newcommand*{\SNHone}[2][\defaultdomain]{{\SN{#2}}_{\Hone[{#1}]}}
\newcommand*{\jump}[2][]{\llbracket{#2}\rrbracket_{#1}}
\newcommand{\D}{\mathrm{d}}
\newcommand{\ol}{\overline}
\newcommand{\be}{\begin{eqnarray}}
\newcommand{\ee}{\end{eqnarray}}
\newcommand{\ben}{\begin{eqnarray*}}
\newcommand{\een}{\end{eqnarray*}}
\newcommand*{\markChange}[1]{\textcolor{blue}{#1}}
\newtheorem{assumption}[theorem]{\sc Assumption}
\newtheorem{remark}[theorem]{\sc Remark}
\newtheorem{algorithm}[theorem]{\sc Algorithm}
\def\address#1{\expandafter\def\expandafter\@aabuffer\expandafter
  {\@aabuffer{\affiliationfont{#1}}\relax\par
    \vspace*{13pt}}}
\title{A fourth-order unfitted characteristic finite element method for
solving the advection-diffusion equation on time-varying domains}
\author{Chuwen Ma\thanks{(1) School of Mathematical Science,
    University of Chinese Academy of Sciences.
    (2) Institute of Computational Mathematics and Scientific/Engineering Computing,
    Academy of Mathematics and Systems Science,
    Chinese Academy of Sciences, Beijing, 100190,
    China. The first author was supported by National Key R \& D
    Program of China 2019YFA0709600 and 2019YFA0709602. (chuwenii@lsec.cc.ac.cn)}
  \and Qinghai Zhang\thanks{School of Mathematical Sciences,
    Zhejiang University,
    38 Zheda Road,
    Hangzhou, Zhejiang Province, 310027 China.
    The second author was supported in part by
    China NSF grant 11871429. (qinghai@zju.edu.cn)}
  \and Weiying Zheng\thanks{(1) LSEC, NCMIS,
    Institute of Computational Mathematics and Scientific/Engineering Computing,
    Academy of Mathematics and Systems Science,
    Chinese Academy of Sciences, Beijing, 100190, China.
    (2) School of Mathematical Science,
    University of Chinese Academy of Sciences.
    The third author was supported in part by
    the National Science Fund for Distinguished Young Scholars 11725106,
    by China NSF grant 11831016, and by National Key R \& D
    Program of China 2019YFA0709600 and 2019YFA0709602. (zwy@lsec.cc.ac.cn)}}
\begin{document}
\maketitle

\begin{abstract}
  We propose a fourth-order unfitted characteristic finite element method
  to solve the advection-diffusion equation on time-varying domains.
  Based on a characteristic-Galerkin formulation,
  our method combines the cubic MARS method for interface tracking,
  the fourth-order backward differentiation formula for temporal integration,
  and an unfitted finite element method for spatial discretization.
  Our convergence analysis includes errors of discretely representing
  the moving boundary, tracing boundary markers,
  and the spatial discretization and the temporal integration of the governing equation.
  Numerical experiments are performed on a rotating domain and a severely deformed
  domain to verify our theoretical results
  and to demonstrate the optimal convergence of the proposed method.
\end{abstract}

\begin{keywords}
  Unfitted characteristic finite element methods,
  time-varying domains,
  moving boundary problems,
  the advection-diffusion equation,
  fourth-order error estimates.
\end{keywords}

\begin{AMS}
  65M60, 65L06, 76R99
\end{AMS}

\section{Introduction}
\label{sec:introduction}

Multiphase flows are ubiquitous in science and engineering
 and the study of them is of great significance
 in a wide range of applications.
One core difficulty to numerical simulation
 is that the domain of each fluid phase may vary in time.
In addition, the potentially large deformations of the domain boundary
 may incur complex interactions of multiple scales both in time and in space.
When the thickness of the interface that separates the fluid from other phases
 is negligible,
 the tracking of the time-varying domain
 can be reduced to that of its boundary;
 in this case the problem is also referred to as a moving boundary problem.

The fidelity of numerically simulating physical processes
 on a time-varying domain
 is very much influenced by the locus of the moving boundary
 within each time step.
On the one hand, interface tracking incurs
 errors that will inevitably affect the accuracy of the entire numerical simulation.
On the other hand, sometimes the velocity field needed for interface tracking
 is not known a priori
 but can only be deduced from the state of the bulk fluid;
 this is especially true for realistic multiphase flows such as
 air-water free-surface flows.
This potentially tight coupling of the fluid and the interface
 in moving boundary problems
 poses great challenges to computational scientists.

There are mainly two approaches
 in solving partial differential equations (PDEs) on domains with
 irregular and moving boundaries.
In the body-fitted methods,
 the discretization mesh is \emph{moved} at each time step
 to follow the time-varying domain
 so that constitutive laws as well as kinematic conditions on the deforming boundary
 can be imposed conveniently.
Another popular category is the unfitted methods,
 in which the underlying mesh, once generated,
 is \emph{fixed} for all time steps
 and the moving interface is allowed to cut cells or elements of
 the static mesh.
Despite its special treatment for cut elements,
 the unfitted methods are attractive in developing high-order schemes.

Unfitted methods have been highly successful.
For stationary interface problems,
 popular unfitted methods include
 the immersed interface method (IIM) \cite{lev94,li06},
 the immersed finite element method (IFEM) \cite{lin09},
 the extended finite element method (XFEM) \cite{dol01,gro11},
 the cut finite element method \cite{bur15},
 the interface-penalty finite element method \cite{han02,wu19},
 the fictitious domain method \cite{has09,bur10} and many others.
For moving boundary problems, Fires and Zilian presented a first-order XFEM method
by using the backward Euler method for time integration \cite{fri09}.
Based on a space-time discontinuous Galerkin discretization,
Lehrenfeld and Reusken \cite{leh13}
proposed a two-dimensional second-order XFEM scheme,
which was extended to three dimensions by Lehrenfeld in \cite{leh15}.
Recently, Guo \cite{guo21} analyzed a backward Euler IFEM
 for solving parabolic moving interface problems.
In \cite{leh19},
 Lehrenfeld and Olshanskii proposed an unfitted finite element method (UFEM)
 via utilizing the backward Euler method for time integration.
More recently, Lou and Lehrenfeld \cite{lou21}
 extended the results in \cite{leh19}
 by combining isoparametric UFEMs with
 $k$th-order backward differentiation formula (BDF-$k$)
 time stepping ($k=1,2,3$);
 a priori error estimates are carried out for $k=2$.
In their work,
 the movements of curved boundaries are analytically prescribed 
 and numerical errors arise only from the PDE discretization.

In spite of their successes,
 unfitted methods are not ready to be deployed
 in the study of realistic multiphase flows yet.
One major roadblock is
 the lack of algorithmic coupling
 of main flow solvers to interface tracking methods.
In current unfitted methods \cite{fre17,leh13,lou21},
 it is usually assumed that explicit, analytic expressions
 have been given \emph{a priori} to fully describe
 the movement of the boundary.
However, this assumption does not hold for all realistic multiphase flows:
 more often than not the movement of the boundary 
 must be determined \emph{on the fly} from state variables of the main
 flow, e.g., the free-surface flows mentioned in the second paragraph.
As the science of multiphase flows evolves towards more and more complex phenomena,
 there is a pressing need for coupling interface tracking algorithms
 to main flow solvers
 so that realistic moving boundary problems
 can be simulated accurately and efficiently.


We answer this need by developing a fourth-order unfitted characteristic finite element method (UCFEM)
 for numerically solving the advection-diffusion equation
 \eqref{cd-model} on time-varying domains.
Our method combines three main components:
 a fifth-order cubic MARS method \cite{zha18} for interface tracking,
 a fourth-order BDF-4 scheme \cite{liu13}
 for integrating a Lagrangian form of the advection-diffusion equation,
 and a UFEM with piecewise bi-quartic functions for spatial
 discretization. 
The computational domain is a fixed Eulerian mesh
 that covers the full movement range of the time-varying domain
 where the advection-diffusion equation holds.

The contributions of this work lie in three aspects.
\begin{enumerate}[leftmargin=8mm]
\item [(a)] Our method is the first fourth-order unfitted method
   for solving moving boundary problems via incorporating
   a fifth-order interface tracking method.
  This is not surprising since fourth- and higher-order interface-tracking algorithms
  have not been available until recently \cite{zha18}.
  To the best of our knowledge,
  these algorithms have been coupled neither to finite element methods
  nor to finite difference/volume methods.
  Although we assume that the movement of the domain boundary
  is prescribed by analytic expressions,
  the interface tracking algorithm adopted in our method
  paves the way to future designs of more sophisticated methods
  that will be able to handle tight couplings of the fluid and the boundary.

\item [(b)] We prove the stability of numerical solutions under the energy norm. In the Lagrangian frame, there is an essential difference between the proofs for second-order and fourth-order schemes. {\it In the latter case, numerical solutions from early time steps must serve as test functions at the present time step, while they do not belong to the present finite element space}. We overcome this difficulty by defining a modified Ritz projection onto the finite element space.

\item [(c)] Our convergence analysis includes
  error estimates not only for boundary representation and tracing boundary markers,
  but also for spatial discretization and temporal integration
  of the governing equation.
  As the main conclusion,
  the overall error is $O(\tau^4)$ under the energy norm for $h=O(\tau)$,
  where $\tau$ and $h$ are the time-step size and the spatial mesh size, respectively.
\end{enumerate}
\vspace{1mm}

The rest of the paper is organized as follows.
In Section \ref{sec:model-problem},
 we formulate the model problem using Lagrangian coordinates.
In Section \ref{sec:interf-track},
 we present the interface tracking algorithm
 and estimate the error between the exact boundary and the numerically
 approximated result.
In Section \ref{sec:fic_FEM},
 we propose the fourth-order UCFEM and
 prove the well-posedness of the discrete problem.
In Section \ref{sec:well_posed}, we define the modified Ritz projection
 and prove the stability of numerical solutions.
Section \ref{sec:err} is devoted to a priori error estimates of numerical solutions.
In Section \ref{sec:num},
 we demonstrate the optimal convergence of our method
 by results of several numerical experiments.

Throughout this paper, $f \lesssim g$ means
$f\le Cg$ with a generic constant $C>0$ independent of
 $\tau$, $h$, and the segment size $\eta$ for interface tracking;
$f\eqsim g$ means that $f \lesssim g$ and $g \lesssim f$ hold simultaneously.
Vector-valued quantities are denoted by boldface symbols,
such as $\Ltwov=\Ltwo^2$,
and matrix-valued quantities are denoted by blackboard bold symbols,
such as $\bbL^2(\Omega)=\Ltwo^{2\times 2}$.

\section{The model problem}
\label{sec:model-problem}

The advection-diffusion equation
with initial and boundary conditions reads
\begin{subequations}\label{cd-model}
  \begin{align}	
    \frac{\partial u}{\partial t} + \Bw \cdot \nabla u- \Delta u  = f
    & \quad \text{in}\;\; \Omega_t, \label{cd-eqn}   \\
    u =0& \quad \text{on}\;\;\Gamma_t, \label{cd-bc}\\
    u(0) =u_0& \quad \text{in}\;\;\Omega_0,  \label{cd-ic}
  \end{align}	
\end{subequations}
where $\Omega_t\subset\bbR^2$ is a bounded and simply-connected domain with time-varying boundary
$\Gamma_t = \partial \Omega_t$,
$\Bw(\Bx,t)$ is a given function satisfying $\Div\Bw=0$,
$u(\Bx,t)$ stands for the tracer transported by the fluid,
and $f(\Bx,t)$ stands for the source term distributed in $\bbR^2$
and has a compact support.
The equation has been scaled
so that the diffusion coefficient before $\Delta u$ is unit.
We restrict the computations to a finite time interval $[0,T]$.

First, we make an assumption on the fluid velocity and the moving boundary. \vspace{2mm}

\begin{center}
\fbox{\parbox{0.975\textwidth}{
\begin{assumption}\label{ass-1}
We assume that $\Bw\in \BC^6(\bbR^2\times [0,T])$ and has compact support, and that
$\Gamma_t$ is $C^4$-smooth for all $t\in [0,T]$.
\end{assumption}}}
\end{center}
\vspace{2mm}

The physical domain is driven by the fluid velocity $\Bw$
and is defined via the flow map
\begin{equation}\label{eq:Omegat}
  \Omega_t :=\left\{\BX(t;0,\Bx): \;\;\Bx\in\Omega_0\right\},
\end{equation}
where $\BX(t;s,\cdot)$ is defined by the solution
to the ordinary differential equations
\begin{equation}\label{eq:X}
\frac{\D}{\D t}\BX(t;s,\Bx)= \Bw\big(\BX(t;s,\Bx),t\big),
    \quad \forall\,t>s\ge 0;\qquad
\BX(s;s,\Bx) = \Bx.
\end{equation}
Since $\Bw$ is $\BC^6$-smooth,
\eqref{eq:X} has a unique solution for every $s\in [0,T]$
and every $\Bx\in\bbR^2$. This implies that $\BX(t;s,\cdot):\Omega_s\to\Omega_t$
is a diffeomorphism.

For any $\Bx_0\in\Omega_0$, we use the flow map $\BX$ to write $\Bx\equiv\Bx(t) = \BX(t;0,\Bx_0)$.
The material derivative of $u$ is defined as
\begin{equation}\label{eq: DuDt}
\frac{\D}{\D t} u(\Bx(t),t) = \frac{\partial u}{\partial t}(\Bx,t)
+ \Bw(\Bx,t) \cdot\nabla_\Bx u(\Bx,t).
\end{equation}
Then \eqref{cd-model} can be written in an equivalent form
\begin{align}\label{cd1-model}	
\frac{\D u}{\D t} - \Delta u  = f
\quad \text{in}\;\; \Omega_t, \qquad
u =0  \quad \text{on}\;\;\Gamma_t, \qquad
u(0) =u_0 \quad \text{in}\;\;\Omega_0.
\end{align}

 \section{The interface tracking algorithm}
\label{sec:interf-track}

In this section, we explain the cubic MARS algorithm that
  we use to approximate the moving boundary.
  Then we establish rigorous error estimate for interface tracking.

Let $t_n=n\tau$, $n=0,1,\cdots,N$, be a uniform partition of the interval $[0,T]$, where $\tau=T/N$ is the time step size. For convenience, we write $\BX^{m,n} := \BX(t_n;t_{m},\cdot)$ for any $n\ge m>0$ and use the shorthand notation $\BX^{n,m} := (\BX^{m,n})^{-1}$.

\subsection{Discrete flow maps}

Let $\BX^{n-1,n}_\tau$ be the approximation of $\BX^{n-1,n}$ such that
$\Bx^n= \BX^{n-1,n}_\tau(\Bx^{n-1})$ is defined by a fifth-order
Runge-Kutta (RK-5) scheme (cf. \cite{ver78}) for solving \eqref{eq:X} from $t_{n-1}$ to
$t_n$. The multi-step discrete flow map is defined as
$\BX^{n-i,n}_\tau = \BX^{n-1,n}_\tau
  \circ \BX^{n-2,n-1}_\tau
  \circ\cdots\circ \BX^{n-i,n-i+1}_\tau$, $1\le i\le n$.
Similarly, the inverse of $\BX^{n-i,n}_\tau$ is denoted by
$\BX^{n,n-i}_\tau := (\BX^{n-i,n}_\tau)^{-1}$.

For any $t\ge s\ge 0$ and $\Bx\in\bbR^2$, the Jacobi matrix of the flow map is defined as
\begin{equation}\label{eq:bbJ}
  \bbJ(t;s,\Bx) := \nabla_\Bx\BX(t;s,\Bx)
  = \bbI +\displaystyle\int_{s}^{t}
  \nabla \Bw(\BX(\xi;s,\Bx),\xi)\, \bbJ(\xi;s,\Bx)\, \D \xi.
\end{equation}
Since $\Div\Bw=0$, we have $\det(\bbJ)\equiv 1$ for all $t\ge s$ (see e.g. \cite{cho90}).
Using Gronwall's inequality and Assumption~\ref{ass-1}, it is easy to show
\begin{align}
  \label{eq:Jnin}
\N{\bbJ(t;s,\cdot)}_{\bbW^{5,\infty}(\bbR^2)}\lesssim 1,\qquad
\N{\bbJ(t_n;t_{n-i},\cdot)-\bbI}_{\bbL^{\infty}(\bbR^2)} \lesssim\tau,\qquad
1\le i\le 4.
\end{align}
Let the Jacobi matrices of $\BX^{n-i,n}$ and
$\BX^{n-i,n}_\tau$ be denoted, respectively, by
\ben
\bbJ^{n-i,n}(\Bx):= \bbJ(t_n;t_{n-i},\Bx), \qquad
\bbJ^{n-i,n}_\tau(\Bx):= \nabla_\Bx\BX^{n-i,n}_\tau(\Bx).
\een
Similarly, the Jacobi matrices of $\BX^{n,n-i}$ and
$\BX^{n,n-i}_\tau$ are denoted, respectively, by
\ben
\bbJ^{n,n-i}:= \left(\bbJ^{n-i,n}\right)^{-1},\qquad
\bbJ^{n,n-i}_\tau:= \left(\bbJ^{n-i,n}_\tau\right)^{-1}.
\een

Since $\BX^{n-1,n}_\tau$ is obtained by the RK-$5$ scheme for
\eqref{eq:X}, the one-step error is $O(\tau^6)$.
For any bounded domain $D\subset\bbR^2$,
standard error estimates give
\begin{align}\label{Xerr}
\NLinfv[D]{\BX^{n-1,n}_\tau-\BX^{n-1,n}} \lesssim \tau^6.
\end{align}
Moreover, taking the gradients of $\BX^{n-1,n},\BX^{n-1,n}_\tau$ with respect to the spatial variable $\Bx$ does not influence the order of temporal error estimates. So we also have
\begin{align}\label{Jerr}
\N{\bbJ^{n-1,n}_\tau-\bbJ^{n-1,n}}_{\bbL^\infty(D)} \lesssim \tau^6.
\end{align}
Combining \eqref{eq:Jnin}--\eqref{Jerr}, we get
\begin{align}\label{eq:Jmn}
\N{\bbJ^{n-i,n}_\tau-\bbI}_{\bbL^{\infty}(D)} \lesssim\tau,\quad
\N{\bbJ^{m,n}_\tau}_{\bbW^{5,\infty}(D)} \lesssim 1,\quad
1\le i\le 4,\quad 0\le m < n.
\end{align}
Their inverses satisfy similar estimates
\begin{align}\label{eq:Jnni}
\N{\bbJ^{n,n-i}-\bbI}_{\bbL^{\infty}(D)} +
      \N{\bbJ^{n,n-i}_\tau-\bbI}_{\bbL^{\infty}(D)}
      \lesssim\tau,\quad
\N{\bbJ^{n,m}}_{\bbL^{\infty}(D)}
      +\N{\bbJ^{n,m}_\tau}_{\bbL^{\infty}(D)}
      \lesssim 1.
\end{align}
Since $\BX^{m,n}_\tau-\BX^{m,n} =\sum_{j=m}^{n-1}(\BX^{j+1,n}_\tau\circ\BX^{j,j+1}_\tau
-\BX^{j+1,n}_\tau\circ\BX^{j,j+1})\circ\BX^{m,j}$, the error estimates for multi-step maps can be obtained similarly
\begin{align}\label{eq:Xerr}
\N{\BX^{m,n}_\tau -\BX^{m,n}}_{\BW^{\mu,\infty}(D)}
+\N{\BX^{n,m}_\tau-\BX^{n,m}}_{\BW^{\mu,\infty}(D)}
\le C(n-m)\tau^6,\qquad \mu=0,1.
\end{align}

\subsection{The cubic MARS algorithm}
\label{sec:spline}

We adopt the cubic MARS algorithm in \cite{zha18}
which constructs a $C^2$-smooth boundary with cubic spline interpolation.
The purpose here is to estimate the error
between the exact boundary and the approximate boundary.

Let $L_0$ be the curve length of $\Gamma_0$. Suppose $\Gamma_0$ has a parametrization
\begin{equation}\label{eq:Gamma0}
\Gamma_0=\left\{\chibf_0(l): l\in [0,L_0]\right\},\qquad \chibf_0\in\BC^4([0,L_0]).
\end{equation}
The interface tracking algorithm starts with a uniform
partition,
$\Cl^0 =\left\{l_j=j\eta:\; j=0,1,\cdots,J_0\right\}$, $\eta=L_0/J_0$,
of the interval $[0,L_0]$
and a set of markers $\Cp^0= \left\{\Bp_j^0:=\chibf_0(l_j): 0\le j\le J_0\right\}$.
\vspace{1mm}

\begin{algorithm}\label{alg:spline}
Given $\Gamma^0_\eta :=\Gamma_0$ and its nodal set
$\Cl^0=\{l^0_j:=l_j: 0\le j\le J_0\}$ and marker set $\Cp^0$, the
cubic MARS algorithm
for constructing $\Gamma^n_\eta$, $n\ge 1$, consists of four steps.
\begin{enumerate}[leftmargin=6mm]
\item Trace forward each marker in $\Cp^{n-1}$
    to obtain the set of markers at $t=t_n$,
    \ben
    \Cp^n =\left\{	\Bp^{n}_{j} =
      \BX^{n-1,n}_\tau(\Bp^{n-1}_{j}):\;
      j=1,\cdots, J_{n-1}\right\},\qquad
      J_n = J_{n-1}.
    \een

\item Adjust the set of markers $\Cp^n$.
\begin{itemize}[leftmargin=4mm]
\item If $M_j:=\left\lceil\SN{\Bp_{j}^n-\Bp^n_{j-1}}/\eta\right\rceil > 1$, create new markers on $\Gamma^{n-1}_\eta$
\ben
\Bp^{n-1}_{j,m} = \chibf_{n-1}\big(l^{n-1}_{j-1}
+m(l^{n-1}_{j}-l^{n-1}_{j-1})/M_j\big),\qquad
1\le m <M_j,
\een
and update $\Cp^n$ as follows
\begin{equation}\label{alg-step2}
 \Cp^n \leftarrow  \Cp^n \cup\left\{
 \BX^{n-1,n}_\tau(\Bp^{n-1}_{j,m}):\;
	1\le m < M_j \right\}, \qquad
    J_n \leftarrow   J_n + M_j -1.
\end{equation}

\item Remove markers from $\Cp^n\backslash\BX^{0,n}_\tau(\Cp^0)$ such that
\ben
0.1\eta < \SN{\Bp^n_{j+1}-\Bp^n_j}\le \eta,\qquad
j =0,\cdots,J_{n}.
\een
\end{itemize}

\item Construct $\Cl^n=\{l^n_j: 0\le j\le J_n\}$ where
$l^n_0=0$ and $l^n_j= l^n_{j-1}+ \SN{\Bp^n_{j}-\Bp^n_{j-1}}$.

\item Compute the cubic spline function $\chibf_n\in\BC^2([0,L_n])$, where $L_n:=l^n_{J_n}$, based on the nodal set $\Cl^n$ and the marker set $\Cp^n$ (see Fig.~\ref{fig:splines}). Construct the approximate boundary by
\begin{equation}\label{eq:ln}
\Gamma^{n}_\eta:=\left\{\chibf_{n}(l): l\in [0,L_n]\right\}.
\end{equation}
\end{enumerate}
\end{algorithm}

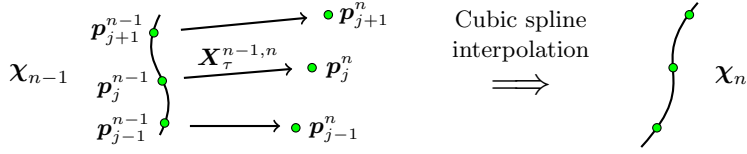
\begin{figure}[http!]
	\centering
	\begin{tikzpicture}[scale =0.8]
	\draw [thick] (-0.5,0) .. controls (0,1) and (-1.0,1) .. (-0.5,2);
	\draw [thick] (7.5,-0.2) .. controls (8.6,1) and (7.5,1.1) .. (8.44,2.2);

	\node at (-2.5,1) {$\chibf_{n-1}$};
	\node at (9,1) {$\chibf_{n}$};

	\draw[fill=green] (-0.42,0.2) circle [radius=0.07];
	\draw[fill=green] (-0.46,0.9) circle [radius=0.07];
	\draw[fill=green] (-0.6,1.7) circle [radius=0.07];
	
	\draw[thick, ->] (0.0,0.15)--(1.5,0.15);
	\draw[thick, ->] (-0.1,0.95)--(1.7,1.1);
	\draw[thick, ->] (-0.15,1.75)--(1.95,1.95);

	\node at (0.8,1.3) {\small $\BX_\tau^{n-1,n}$};

	\node at (-1.1,0.1) {\small $\Bp_{j-1}^{n-1}$};
	\node at (-1.1,0.8) {\small $\Bp_{j}^{n-1}$};
	\node at (-1.2,1.75) {\small $\Bp_{j+1}^{n-1}$};
	
	\node at (2.4, 0.1) {\small $\Bp_{j-1}^{n}$};
	\node at (2.5,1.1) {\small $\Bp_{j}^{n}$};
	\node at (2.9,2.0) {\small $\Bp_{j+1}^{n}$};

	\draw[fill=green] (1.76,0.12) circle [radius=0.07];
	\draw[fill=green] (2.03,1.12) circle [radius=0.07];
	\draw[fill=green] (2.3,2.0) circle [radius=0.07];

	\node at (5.5, 1.9) {\small Cubic spline};
	\node at (5.5, 1.4) {\small interpolation};
    \node at (5.5, 0.8){\Large $\Longrightarrow$};

	\draw[fill=green] (7.76,0.12) circle [radius=0.07];
	\draw[fill=green] (8.03,1.12) circle [radius=0.07];
	\draw[fill=green] (8.3,2.0) circle [radius=0.07];
	\end{tikzpicture}
\caption{An illustration of the interface tracking.}
	\label{fig:splines}
\vspace{-3mm}
\end{figure}

\begin{remark}
The use of cubic spline function in constructing $\Gamma^n_\eta$ has
two benefits. 1) The explicit expression of $\chibf_n$ makes the
computation of integrals on cut elements very efficient. 2)The
$C^2$-smoothness of $\Gamma^n_\eta$ admits a duality
argument in $L^2$-finite element error estimates.
\end{remark}

\subsection{Error estimate for the approximate boundary}

Now we estimate the difference between the approximate boundary $\Gamma^n_\eta$ and the exact boundary $\Gamma_{t_n}$.
{\it The theories of this subsection are restricted to the case without Step~2 in Algorithm~\ref{alg:spline}}.
Here we emphasize that this restriction on Algorithm~\ref{alg:spline} is only required for theoretical analyses, not for numerical computations.
\vspace{2mm}

\begin{center}
\fbox{\parbox{0.975\textwidth}{
\begin{assumption}\label{ass-2}
In theoretical analyses of this paper, we let $\chibf_n\in \BC^2([0,L_0])$ be the cubic spline function computed with
the marker set $\Cp^n =\left\{\Bp^n_j=\BX^{0,n}_\tau(\Bp^0_j): 0\le j\le J_0\right\}$ and the initial nodal set $\Cl^0$. Moreover, the segment size for interface tracking satisfies $\eta=O(\tau^{5/4})$.
\end{assumption}}}
\end{center}
\vspace{2mm}

Assumption~\ref{ass-2} indicates that our theories only apply to the case that $\Gamma_t$ has mild deformations. Remember that the exact boundary is given by
 \begin{align}\label{Xchi}
 \Gamma_{t_n} =\left\{\hat\chibf_n(l):
	0\le l\le L_0\right\},\qquad
	\hat\chibf_n:=\BX^{0,n}\circ\chibf_0.
 \end{align}
Assumption~\ref{ass-1} implies
$\N{\hat\chibf_n}_{\BC^4([0,L_0])}  \lesssim 1$ for all $0\le n\le N$.
By \eqref{eq:Jnin} and \eqref{eq:Jnni}, the arc length of $\Gamma_{t_n}$ between $\BX^{0,n}(\Bp^0_{j-1})$ and $\BX^{0,n}(\Bp^0_{j})$ satisfies
\begin{align*}
\int^{l_j}_{l_{j-1}} \SN{\hat\chibf_n'}	= \int^{l_j}_{l_{j-1}}\SN{\bbJ^{0,n}\chibf_0'}
	\lesssim \eta, \qquad
\eta = \int^{l_j}_{l_{j-1}} \SN{\chibf_0'}
	\lesssim \int^{l_j}_{l_{j-1}}  \SN{\bbJ^{n,0}\hat\chibf_n'}
	\lesssim \int^{l_j}_{l_{j-1}}  \SN{\hat\chibf_n'}.
\end{align*}
This means that $\BX^{0,n}(\Cp_0)$ provides a quasi-uniform partition of $\Gamma_{t_n}$.
Intuitively, the approximation of $\Gamma_{t_n}$ with $\Gamma^n_\eta$
does not deteriorate if we use $\Cp^n=\BX^{0,n}(\Cp_0)$.

Based on the uniform partition $\Cl^0$ of $[0,L_0]$,
$\chibf_n$ can be written explicitly as follows
\begin{align}\label{spline-chi-n}
\chibf_n(l) = \sum_{j=1}^{J_0}\left[\Bp_j^{n}b_j +
\alphabf_j^{n}b_j(b_j^2-1)\right], \qquad \alphabf_0=\alphabf_{J_0},
\end{align}
where $b_j\in C([0,L_0])$ is linear on each sub-interval $[l_{i-1},l_i]$ and satisfies
$b_j(l_i) =\delta_{i,j}$.
Define ${\underline\Bd}^{n}
=\left[\Bd_1^{n},\cdots,\Bd_{{J_0}}^{n}\right]^\top$ with
$\Bd_j^{n}= \Bp_{j+1}^n+\Bp_{j-1}^n-2\Bp_{j}^n$ and $\Bp_{{J_0}+1}^n=\Bp_1^n$.
The coefficient tensor $\underline\alphabf^{n} =\left[
\alphabf_1^{n},\cdots,\alphabf_{{J_0}}^{n}\right]^\top$ solves the system of algebraic equations
\begin{equation}	\label{eq:Gad}
	\bbG {\underline\alphabf}^{n} = {\underline\Bd}^{n} ,
\end{equation}
where $\bbG$ is the $(2{J_0})\times(2{J_0})$ matrix
\begin{equation}\label{eq:G}
\bbG=
\begin{bmatrix}
		4     &0     &1   &       &       &1   & 0\\
		0     &4     &0     &1    &       &   &1\\
		1   &0     &4     &0      &1    &      & \\
		&\ddots&\ddots&\ddots &\ddots &\ddots& \\
		&      &1   &0      &4      &0     &1 \\
		1   &      &      &1    &0      &4     &0  \\
		0 &1   &      &       &1    &0     &4
\end{bmatrix} .
\end{equation}
To measure the difference between $\Gamma^n_\eta$ and $\Gamma_{t_n}$, it suffices to estimate the error $\chibf_n-\hat\chibf_n$. \vspace{1mm}

\begin{lemma}\label{lem:bdr}
Let Assumptions~\ref{ass-1} and \ref{ass-2} be satisfied. Then
  \begin{align}\label{eq:Gamma-tn-err}
    \N{\chibf_n-\hat\chibf_n}_{\BC^{\mu}([0,L_0])}
    \lesssim \eta^{4-\mu}+\tau^5,\quad 0\le n\le N,
    \quad \mu=0,1, 2.
  \end{align}
\end{lemma}
\begin{proof}
Define $\tilde\chibf_n(l):= \BX^{0,n}_\tau(\chibf_0(l))$. Clearly $\chibf_n$ is the cubic spline interpolation of $\tilde\chibf_n$. By the chain rule, \eqref{eq:Jmn}, and standard error estimates for cubic spline interpolations, we have
\begin{equation}\label{eq:tchi-err}
\N{\tilde\chibf_n}_{\BC^4([0,L_0])} \lesssim 1,\qquad
\N{\chibf_n-\tilde\chibf_n}_{\BC^{\mu}([0,L_0])}\lesssim \eta^{4-\mu}.
\end{equation}
The proof is finished by using the triangular inequality and \eqref{eq:Xerr}.
\end{proof}

\begin{theorem}\label{thm:chi-err}
Let $\eta=O(\tau^{5/4})$.
The composite function $\BX_{\tau}^{m,n}\circ\chibf_{m}(l)=\BX_{\tau}^{m,n}(\chibf_{m}(l))$ satisfies
\begin{equation}\label{chi-err}
\N{\chibf_n - \BX_{\tau}^{m,n}\circ\chibf_m}_{\BC^{\mu}([0,L_0])}
    \lesssim \tau^{6-5\mu/4},\quad  0\le n-m\le 4, \quad \mu=0,1.
  \end{equation}
\end{theorem}
\begin{proof}
First we estimate $\chibf_n-\chibf_m$.
For fixed $s\ge 0$ and $\Bp$, we define a univariate function of $t$ by
$\BW(t;s,\Bp):= \Bw(\BX(t;s,\Bp), t)$. Let $\BW^{(i)}(t;s,\Bp)$ denote the $i^{\rm th}$-order derivative of $\BW(t;s,\Bp)$ with respect to $t$.
Assumption~\ref{ass-1} and the chain rule show that
  \ben
  \big|\BW^{(1)}(t;s,\Bp)\big| = \Big|(\Bw\cdot\nabla\Bw)(\BX(t;s,\Bp),t)
    +\frac{\partial\Bw}{\partial t}(\BX(t;s,\Bp),t)\Big|\lesssim 1.
  \een
  High-order derivatives of $\BW(\cdot;s,\Bp)$
  can be estimated similarly. Using \eqref{eq:X}, we have
  \ben
  \N{\BX(\cdot;s,\Bp)}_{\BC^7([s,T])}
  +\N{\BW(\cdot;s,\Bp)}_{\BC^6([s,T])}
  \lesssim 1 .
  \een

By \eqref{eq:Xerr} and Taylor's expansion of $\BX(t_n;t_m,\Bp)$ at $t_m$, we have
\begin{equation}\label{eq:Xp}
\BX_\tau^{m,n}(\Bp) = \BX(t_n;t_m,\Bp) +O(\tau^6)
= \Bp + \sum_{i=0}^4\frac{\BW^{(i)}(t_m;t_m,\Bp)}{(i+1)!}(t_n-t_m)^{i+1} +O(\tau^6).
\end{equation}
Since $\N{\bbG^{-1}}_{\infty}\lesssim 1$, taking $\Bp=\Bp^n_j\equiv \BX^{m,n}_\tau(\Bp^{m}_j)$ in \eqref{eq:Xp} and using the definitions of $\Bd_j^n$ and $\Bd_j^{m}$, we find that
\begin{align}\label{eq:alpha2}
\underline\alphabf^{n} - \underline\alphabf^{m}
    = \bbG^{-1}(\underline\Bd^n - \underline\Bd^{m})
    = \sum_{i=0}^{4}\frac{(t_n-t_m)^{i+1}}{(i+1)!}\underline{\gammabf_i}
+O(\tau^6),
\end{align}
where $\underline{\gammabf_i}=\bbG^{-1}\underline{\betabf_i}$ and $\underline{\betabf_i}=
\left[\betabf_{i,1},\cdots,\betabf_{i,{J_0}}\right]^\top$. Each component of $\underline{\betabf_i}$ is defined as
\ben
\betabf_{i,j} = \BW^{(i)}(t_{m};t_{m},\Bp_{j+1}^{m})
+\BW^{(i)}(t_{m};t_{m},\Bp_{j-1}^{m})
-2\BW^{(i)}(t_{m};t_{m},\Bp_{j}^{m}),
\een
where $\Bp_{{J_0}+1}^{m} :=\Bp_{1}^{m}$.
Using the equality $|b_j'|=\eta^{-1}$ and \eqref{spline-chi-n}, we immediately get
\begin{align}\label{chi-chi-0}
\chibf_n^{(\mu)} -\chibf_{m}^{(\mu)}
 = \sum_{i=0}^{4}\frac{(t_n-t_m)^{i+1}}{(i+1)!}
    \zetabf_i^{(\mu)} +O(\eta^{-\mu}\tau^6), \qquad \mu=0,1,
\end{align}
where $\zetabf_i= \sum_{j=1}^{J_0}\big[\BW^{(i)}(t_{m};t_{m},\Bp_j^{m})b_j
    + \gammabf_{i,j}b_j(b_j^2-1)\big]$ is a cubic spline function on $[0,L_0]$.

Note from \eqref{eq:tchi-err} that $\tilde\chibf_m=\BX^{0,m}_\tau\circ\chibf_0
\in\BC^4([0,L_0])$ and $\Bw\in\BC^6(\bbR^2\times[0,T])$. For $0\le i\le 4$,
$\BW^{(i)}(t_m;t_m,\tilde\chibf_m)$ defines a function in
$\BC^{\min(4,6-i)}([0,L_0])$. Moreover, since
\ben
\tilde\chibf_{m}(l_j)= \BX^{0,m}_\tau(\chibf_0(l_j))
=\BX^{0,m}_\tau(\Bp^0_j)=\Bp_j^{m},\qquad 0\le j\le {J_0},
\een
$\zetabf_i$ is actually the cubic spline interpolation of
$\BW^{(i)}(t_m;t_m,\tilde\chibf_m)$.
Standard error estimates yield
  \begin{align}
    \dN{\zetabf_i-\BW^{(i)}(t_{m};t_{m},
    \tilde\chibf_{m})}_{\BC^\mu([0,L_0])}
    \lesssim  \eta^{-\mu}\eta^{\min(4,6-i)}
    \lesssim \eta^{-\mu}\big(\tau^5+ \tau^{7.5-1.25i}\big),
    \label{zeta-m-err}
  \end{align}
where we have used the relation $\eta=O(\tau^{5/4})$. Combining \eqref{chi-chi-0} and \eqref{zeta-m-err} yields
  \begin{equation}
    \chibf_n^{(\mu)}- \chibf_{m}^{(\mu)} =
    \sum_{i=0}^4\frac{(t_n-t_m)^{i+1}}{(i+1)!}
    \frac{\D^\mu}{\D l^\mu}\BW^{(i)}(t_{m};t_{m},\tilde\chibf_{m})
    +O\big(\eta^{-\mu}\tau^6\big),\quad \mu=0,1.
    \label{chi-chi-2}
  \end{equation}

Applying \eqref{eq:Xp} to $\BX_\tau^{m,n}(\chibf_{m})$ and using
\eqref{chi-chi-2} and \eqref{eq:tchi-err}, we find that
\begin{equation*}
|\chibf_n-\BX_\tau^{m,n}\circ\chibf_{m}|
    \lesssim \sum_{i=0}^4\tau^{i+1}
    |\BW^{(i)}(t_{m};t_{m},\tilde\chibf_{m})
    -\BW^{(i)}(t_{m};t_{m},\chibf_{m})|  +\tau^6
    \lesssim \tau^6.
  \end{equation*}
Since $\Bw\in\BC^6(\bbR^2\times [0,T])$, similar to \eqref{eq:Xp},
  we also have
  \begin{align*}
    \nabla_\Bp\BX_\tau^{m,n}(\Bp) = \bbI + \sum_{i=0}^4
    \frac{(t_n-t_m)^{i+1}}{(i+1)!}\nabla_\Bp\BW^{(i)}(t_{m};t_{m},\Bp)
    +O(\tau^6) .
  \end{align*}
Using \eqref{chi-chi-2} and \eqref{eq:tchi-err},
the derivative of $\chibf_n-\BX_\tau^{m,n}\circ\chibf_{m}$ can be estimated similarly
  \begin{equation*}
    \begin{aligned}
      \SN{\chibf_n' -(\BX_\tau^{m,n}\circ\chibf_{m})'}
      \lesssim \,&\sum_{i=0}^4\tau^{i+1}
      \Big|\frac{\D}{\D l}\BW^{(i)}(t_m;t_m,\tilde\chibf_m)
        -\frac{\D}{\D l}\BW^{(i)}(t_m;t_m,\chibf_m)\Big|
      + \eta^{-1}\tau^6\\
      \lesssim \,&\sum_{i=0}^4\tau^{i+1}
      \SN{\tilde\chibf_m' -\chibf_m'}+ \eta^{-1}\tau^6
      \lesssim \eta^{-1}\tau^6.
    \end{aligned}
  \end{equation*}
  The proof is finished by using $\eta=O(\tau^{5/4})$.
\end{proof}

\section{The unfitted characteristic finite element method}
\label{sec:fic_FEM}

The purpose of this section is to propose the UCFEM
for solving \eqref{cd-model} on a fixed mesh.
First we take an open square $D\subset\bbR^2$ which is large enough
such that $\Omega_t\cup\Omega^n_\eta \subset D$ for all $0\le t\le T$ and $0\le n\le N$.

\subsection{Finite element spaces}

Let $\Ct_h$ be the uniform partition of $\bar D$ into
{\em closed} squares of side-length $h$.
Let $\tilde\Omega^n_\eta\subset D$ be a domain slightly larger than $\Omega^n_\eta$:
\begin{equation}\label{tdomain}
\tilde\Omega^n_\eta :=\left\{\Bx\in\bbR^2:
    \mathrm{dist}\big(\Bx,\ol{\Omega^n_\eta}\big)< h/2\right\},\qquad
\tilde\Gamma^n_\eta :=\partial\tilde\Omega^n_\eta
\end{equation}
Clearly $\Ct_h$ generates a cover of $\tilde\Omega^n_{\eta}$
and a cover of $\Gamma^n_{\eta}\cup\tilde\Gamma^n_\eta$, respectively,
(see Fig.~\ref{fig:mesh})
\ben
&&\Ct^n_h := \left\{K\in\Ct_h:\;
\mathrm{area}(K\cap\tilde\Omega^n_{\eta})>0\right\}, \\
&&\Ct^n_{h,B} := \left\{K\in \Ct^n_h:\;
\mathrm{length}(K\cap\Gamma^n_{\eta}) >0\;\;\hbox{or}\;\;
\mathrm{length}(K\cap\tilde\Gamma^n_{\eta}) >0\right\} .
\een
Then $\Ct^n_h$ generates a polygonal domain $\Omega^n_h$ which contains $\tilde\Omega^n_\eta$:
\ben
\Omega^n_h:= \mathrm{interior}
\big(\cup_{K\in\Ct^n_h}K\big),\qquad
\Gamma^n_h := \partial\Omega^n_h .
\een
Let $\Ce_h$ be the set of all edges in $\Ct_h$ and define
\ben
\Ce_{h,B}^{n}= \big\{E\in\Ce_h: \; E\not\subset \Gamma^n_h\;\;\hbox{and}\;\;
\exists K\in \Ct^n_{h,B}\;\;
\hbox{s.t.}\;\; E\subset\partial K\big\}.
\een

\begin{figure}[http!]\label{fig:sub12}
	\centering
\begin{subfigure}{.3\textwidth}
		\centering
\begin{tikzpicture}[scale =1.7]
\filldraw[red!30!](0.2*2,0.2*2)--(0.2*8,0.2*2)--(0.2*8,0.2*3)--(0.2*9,0.2*3)--(0.2*9,0.2*7)
    --(0.2*8,0.2*7)--(0.2*8,0.2*8)--(0.2*2,0.2*8)--(0.2*2,0.2*7)--(0.2*1,0.2*7)--(0.2*1,0.2*3)
    --(0.2*2,0.2*3)--(0.2*2,0.2*2);

\filldraw[white](0.2*3,0.2*6)--(0.2*4,0.2*6)--(0.2*4,0.2*7)--(0.2*5,0.2*7)--(0.2*6,0.2*7)--(0.2*6,0.2*6)
    --(0.2*7,0.2*6)--(0.2*7,0.2*5)--(0.2*7,0.2*4)--(0.2*6,0.2*4)--(0.2*6,0.2*3)--(0.2*5,0.2*3)--(0.2*4,0.2*3)
    --(0.2*4,0.2*4)--(0.2*3,0.2*4)--(0.2*3,0.2*5)--(0.2*3,0.2*6);

\draw[black, thick] (1,1) ellipse [x radius=0.56cm, y radius=0.5cm];
\draw [step =0.2cm,gray,thin] (0,0) grid (2cm,2cm);
			
\draw[red,thick](0.2*2,0.2*2)--(0.2*8,0.2*2)--(0.2*8,0.2*3)--(0.2*9,0.2*3)--(0.2*9,0.2*7)
    --(0.2*8,0.2*7)--(0.2*8,0.2*8)--(0.2*2,0.2*8)--(0.2*2,0.2*7)--(0.2*1,0.2*7)--(0.2*1,0.2*3)
    --(0.2*2,0.2*3)--(0.2*2,0.2*2);

			\node[left] at (0.3,1.7) {$D$};
			\node[left] at (1.2,0.2*5) {$\Omega_\eta^n$};
			\node[right] at (1.2,0.2*7.35){$\Gamma_\eta^n$};
		\end{tikzpicture}
\end{subfigure}
\qquad
\begin{subfigure}{.3\textwidth}
\centering
\begin{tikzpicture}[scale =2.5]
\filldraw[red!30!](0.2*2,0.2*2)--(0.2*8,0.2*2)--(0.2*8,0.2*3)--(0.2*9,0.2*3)--(0.2*9,0.2*7)
    --(0.2*8,0.2*7)--(0.2*8,0.2*8)--(0.2*2,0.2*8)--(0.2*2,0.2*7)--(0.2*1,0.2*7)--(0.2*1,0.2*3)
    --(0.2*2,0.2*3)--(0.2*2,0.2*2);

\filldraw[yellow](0.2*3,0.2*6)--(0.2*4,0.2*6)--(0.2*4,0.2*7)--(0.2*5,0.2*7)--(0.2*6,0.2*7)--(0.2*6,0.2*6)
    --(0.2*7,0.2*6)--(0.2*7,0.2*5)--(0.2*7,0.2*4)--(0.2*6,0.2*4)--(0.2*6,0.2*3)--(0.2*5,0.2*3)--(0.2*4,0.2*3)
    --(0.2*4,0.2*4)--(0.2*3,0.2*4)--(0.2*3,0.2*5)--(0.2*3,0.2*6);
			
\draw[red,thick](0.2*2,0.2*2)--(0.2*8,0.2*2)--(0.2*8,0.2*3)--(0.2*9,0.2*3)--(0.2*9,0.2*7)
    --(0.2*8,0.2*7)--(0.2*8,0.2*8)--(0.2*2,0.2*8)--(0.2*2,0.2*7)--(0.2*1,0.2*7)--(0.2*1,0.2*3)
    --(0.2*2,0.2*3)--(0.2*2,0.2*2);

\draw[black, thick] (1,1) ellipse [x radius=0.56cm, y radius=0.5cm];
\draw[step =0.2cm,gray,thin] (0.6,0.6) grid (7*0.2cm,7*0.2cm);
\draw[blue,thick](0.2*2,0.2*3)--(0.2*2,0.2*7);
\draw[blue,thick](0.2*3,0.2*2)--(0.2*3,0.2*8);
\draw[blue,thick](0.2*4,0.2*2)--(0.2*4,0.2*4); \draw[blue,thick](0.2*4,0.2*6)--(0.2*4,0.2*8);
\draw[blue,thick](0.2*5,0.2*2)--(0.2*5,0.2*3); \draw[blue,thick](0.2*5,0.2*7)--(0.2*5,0.2*8);
\draw[blue,thick](0.2*6,0.2*2)--(0.2*6,0.2*4); \draw[blue,thick](0.2*6,0.2*6)--(0.2*6,0.2*8);
\draw[blue,thick](0.2*7,0.2*2)--(0.2*7,0.2*8);
\draw[blue,thick](0.2*8,0.2*3)--(0.2*8,0.2*7);

\draw[blue,thick](0.2*2,0.2*3)--(0.2*8,0.2*3);
\draw[blue,thick](0.2*1,0.2*4)--(0.2*4,0.2*4); \draw[blue,thick](0.2*6,0.2*4)--(0.2*9,0.2*4);
\draw[blue,thick](0.2*1,0.2*5)--(0.2*3,0.2*5); \draw[blue,thick](0.2*7,0.2*5)--(0.2*9,0.2*5);
\draw[blue,thick](0.2*1,0.2*6)--(0.2*4,0.2*6); \draw[blue,thick](0.2*6,0.2*6)--(0.2*9,0.2*6);
\draw[blue,thick](0.2*2,0.2*7)--(0.2*8,0.2*7);

			\node[left] at (1.15,1.0) {$\Omega_h^n$};
			\node[left] at (2.0,0.47) {\small $\Ct_h^n$};
			\node[right] at (1.18,1.5) {\small $\Ct_{h,B}^n$};
		\end{tikzpicture}
	\end{subfigure}
\caption{Left: the square domain $D$ and its partition $\Ct_h$, the approximate boundary $\Gamma^n_\eta$, and the approximate domain $\Omega_\eta^n$ surrounded by $\Gamma^n_\eta$.
Right: the set of red and yellow squares $\Ct^n_h$, the set of red squares $\Ct^n_{h,B}$, the set of blue edges $\Ce_{h,B}^{n}$, and $\ol{\Omega_h^n}=$ the union of red and yellow squares.} \label{fig:mesh}\vspace{-7mm}
\end{figure}
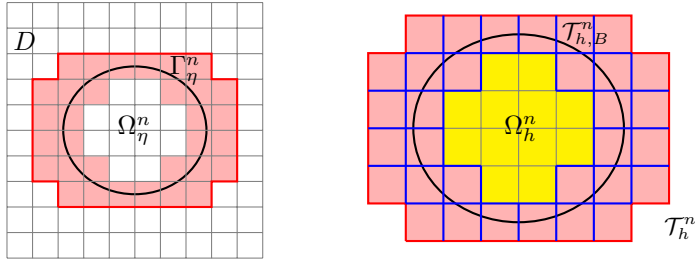

The finite element spaces on $D$ and on $\Omega^n_h$ are,
respectively, defined as
\begin{align*}
{V_h}:= \big\{v\in\Hone[D]:
	v|_K\in Q_4(K),\;\forall\,K\in \Ct_h\big\}, \qquad
{V^n_h} := \big\{v|_{\Omega^n_h}: v\in V_h\big\} ,
\end{align*}
where $Q_4$ is the space of polynomials
whose degrees are no more than $4$ for each variable.
The space of piecewise regular functions over $\Ct^n_h$ is defined as
\ben
H^m(\Ct^n_h) := \big\{v\in\Ltwo[\Omega^n_h]:\;
v|_K\in H^m(K),\;\forall\, K\in\Ct^n_h\big\},\qquad m\ge 1.
\een
Throughout the paper,
we extend $v_h\in {V^n_h}$ to the exterior of $\Omega^n_h$
such that the extension, denoted still by $v_h$,
belongs to ${V_h}$ and vanishes at degrees of freedom outside of
$\ol{\Omega^n_h}$. It is easy to see that
\begin{align}	\label{extension}
	\NLtwo[D]{v_h}\lesssim \NLtwo[\Omega^n_h]{v_h}, \qquad
	\NHone[D]{v_h}\lesssim \NHone[\Omega^n_h]{v_h}.
\end{align}

\subsection{The discrete problem}

We define four bilinear forms on
$H^5(\Ct^n_h) \cap\Hone[\Omega^n_h]$ as follows
\begin{align}	
\mathscr{A}^n_h(w,v):=\,&
(\nabla w,\nabla v)_{\Omega^n_{\eta}} + \mathscr{S}^{n}_h(w,v)
	+ \mathscr{J}_{0}^{n}(w,v)	+ \mathscr{J}_{1}^{n}(w,v) ,\label{A-nh}\\
	\mathscr{S}^n_h(w,v):=\,&- \int_{\Gamma_\eta^n}
	\left(v\partial_\Bn w  + w\partial_\Bn v \right),
	\label{S-nh}\\
	\mathscr{J}^n_0(w,v):=\,&\frac{\gamma_0}{h}
	\int_{\Gamma^{n}_\eta} w v ,
	\label{J0-nh}\\
	\mathscr{J}^n_1(w,v):=\,& \gamma_1
	\sum_{E\in \Ce_{h,B}^{n}}
	\sum_{l=1}^4 h^{2l-1}\int_E
	\jump{\partial_{\Bn}^l w}
	\jump{\partial_{\Bn}^l v}, \label{J1-nh}
\end{align}	
where $\gamma_0,\gamma_1$ are positive constants and
$\partial_{\Bn} v$ denotes the normal derivative of $v$ on $\Gamma^n_\eta$.
In \eqref{J1-nh}, $\partial_{\Bn}^l v$ denotes
the $l$-th order normal derivative of $v$ on $E$ and $\jump{\partial_{\Bn}^l v}$ denotes the jump of $\partial_{\Bn}^l v$ across $E$.
Here $\mathscr{J}^n_0$ is used to
impose the Dirichlet boundary condition of $u^n_h$ weakly,
$\mathscr{J}^n_1$ is used to
enhance the stability of $u^n_h$ (see section~\ref{sec:well_posed}).
Moreover, $(\cdot,\cdot)_{\Omega^n_{\eta}}$ stands for the inner product on $\Ltwo[\Omega^n_{\eta}]$.
\vspace{1mm}

\begin{center}
\fbox{\parbox{0.975\textwidth}{\quad
The UCFEM for \eqref{cd-model} is to
seek  $u^n_h\in V_h^{n}$ such that
\begin{align}	\label{weak-nh}
\frac{1}{\tau}\sum_{i=0}^4 \lambda_i \big(U_h^{n-i,n}, v_h\big)_{\Omega^n_{\eta}}
	+ \mathscr{A}^n_h(u_h^n,v_h)
	= (f^n,v_h)_{\Omega^n_{\eta}} \qquad
	\forall\,v_h\in V_h^{n},
\end{align}	
where $(\lambda_0,\lambda_1,\lambda_2,\lambda_3,\lambda_4)=(25/12,\,-4,\, 3,\, -4/3,\, 1/4)$, $f^n=f(t_n)$, and
\ben
U_h^{n-i,n}(\Bx):=\big(u_h^{n-i}\circ\BX_\tau^{n,n-i}\big)(\Bx)
 =u_h^{n-i}(\BX_\tau^{n,n-i}(\Bx)). \qquad \hbox{(see Fig.~\ref{fig:Uh})}
\een}}
\end{center}

\begin{figure}[http!]
	\centering
	\begin{tikzpicture}[scale =1.3]
		\draw [thick] (0,0) to [out=45,in=190] (0.8,1);
		\draw[thick,->] (0.3,0.5)--(0.25,0.4);
		\draw[fill=green] (0,0) circle [radius=0.05];
		\draw[fill=red] (0.8,1) circle [radius=0.05];

	\node at (1.4,1) {\small $\Bx\in\Omega^n_\eta$};

	\draw[fill =white] (2,0.65) rectangle (6.8,1.15);
	\node at (4.4,0.9) {\small $U_h^{n-i,n}(\Bx) :=u_h^{n-i}\circ\BX^{n,n-i}_h(\Bx) = u_h^{n-i}(\By)$};

	\node at (1,0) {\small $\By=\BX_\tau^{n,n-i}(\Bx)$};	

	\draw[fill =white] (-1.3, -0.05) rectangle (-0.25,0.45);
	\node at (-0.78,0.2) {\small $u_h^{n-i}(\By)$};
	\end{tikzpicture}
\caption{An illustration of $U_h^{n-i,n}(\Bx)$:
calculate $\By=\BX_\tau^{n,n-i}(\Bx)$ and evaluate $u^{n-i}_h(\By)$.}
	\label{fig:Uh}
\vspace{-7mm}
\end{figure}

\subsection{Some discussions of quadratures}

Numerical solutions of PDEs on moving domains are usually time-consuming due to quadratures on ``irregular domains'' or ``cut elements''. The issue becomes even more extrusive for high-order methods than low-order methods. Now we explain that the quadratures in our method can be done efficiently.

The most time-consuming computations involve the first term on the left-hand side of \eqref{weak-nh}:
\begin{align*}
\int_{\Omega^n_\eta}U^{n-i,n}_h v_h
=\sum_{K\in\Ct^n_h} \int_{K\cap\Omega^n_\eta}U^{n-i,n}_h v_h,
\quad 0\le i\le 4,\quad v_h\in {V^n_h}.
\end{align*}
We consider the quadratures on interior elements and
boundary elements, respectively.
\begin{enumerate}[leftmargin=6mm]
\item {\it Quadratures on interior elements}
The integrand $U_h^{n-i,n}v_h$ is continuous and piecewise smooth on any interior element $K\subset\Omega^n_\eta$. We compute $\int_K U_h^{n-i,n}v_h$ directly with the $(2k+3)^{\text{th}}$-order Gauss-Legendre quadrature on $K$.
\vspace{1mm}

\item {\it Quadratures on cut elements.} Suppose $K\in \Ct^n_{h,B}$.
Since $\Gamma^n_\eta$ is constructed with Algorithm~\ref{alg:spline}, the curved edges $K\cap\Gamma^n_\eta$ are represented explicitly by the piecewise cubic function $\chibf_n$. Therefore, $K\cap\Omega^n_\eta$ can be easily subdivided into the union of several X-type and Y-type sub-regions (see Fig.~\ref{fig:cutElement}). We compute the integral on each sub-region with the $(2k+3)^{\text{th}}$-order Gauss-Legendre quadrature.
\end{enumerate}

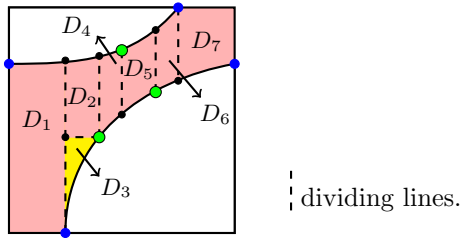
\begin{figure}[http!]
	\centering
	\begin{tikzpicture}[scale =1.5]
		\draw[thick,fill = red!30!white] (0,0) rectangle (2,2);
		\filldraw[yellow](0.5,0) rectangle (0.8,2);
		\filldraw[red!30!white] (0.5,0.85) rectangle (0.8,2);
		\draw [thick,fill=white] (0.5,0) to [out=90,in=190](2,1.5) --(2,0)--(0.5,0);
		\draw [thick,fill=white] (0,1.5) to [out=0,in=230](1.5,2)--(0,2)--(0,1.5);
        \draw[thick,dashed] (0.5,0)--(0.5,1.54);
        \draw[thick,dashed] (0.8,0.85)--(0.8,1.58);
        \draw[thick,dashed] (1.3,1.25)--(1.3,1.8);	
        \draw[thick,dashed] (1,1.62)--(1,1.1);
        \draw[thick,dashed] (1.5,2)--(1.5,1.4);
        \draw[thick,dashed] (0.8,0.85)--(0.5,0.85);
        \draw [blue,fill=blue] (0.5,0) circle [radius=0.04];
        \draw [blue,fill=blue] (2,1.5) circle [radius=0.04];	
        \draw [blue,fill=blue] (0,1.5) circle [radius=0.04];
        \draw [blue,fill=blue] (1.5,2) circle [radius=0.04];

        \draw[black,fill=black](0.5,1.53) circle[radius=0.03];
        \draw[black,fill=black](0.8,1.57)circle[radius=0.03];
        \draw[black,fill=black](1.3,1.8)circle[radius=0.03];
        \draw[black,fill=black](1,1.05)circle[radius=0.03];

        \draw[black,fill=black](0.5,0.85)circle[radius=0.03];

        \draw[black,fill=green] (1,1.62) circle[radius=0.05];
        \draw[black,fill=green] (0.8,0.85) circle [radius=0.05];
        \draw[black,fill=green] (1.3,1.25) circle [radius=0.05];

        \draw[black,fill=black] (1.5,1.35) circle [radius=0.03];

         \node at (0.25,1) {\small $D_1$};
         \node at (0.65,1.2) {\small $D_2$};
         \draw[->,thick] (0.6,0.75)--(0.8,0.5);
         \node at (0.95,0.4) {\small $D_3$};
         \draw[->,thick] (0.95,1.5)--(0.78,1.75);
         \node at (0.6,1.83) {\small $D_4$};
         \node at (1.15,1.45) {\small $D_5$};
         \draw[->,thick](1.42,1.53)--(1.7,1.2);
         \node at (1.83,1.05) {\small $D_6$};
         \node at (1.75,1.7) {\small $D_7$};

         \draw[thick,dashed] (2.5,0.2)--(2.5,0.6);
         \node at (3.3,0.3) {dividing lines.};
\end{tikzpicture}
\caption{The partition of a cut element: $K\cap\Omega^n_\eta=\bigcup_{i=1}^7D_i$.
Each sub-region $D_i$ is of either X-type or Y-type ($D_3$ is of Y-type and the others are of X-type). Each dividing line has at least one endpoint being a marker (green point) or an intersection point of $\Gamma^n_\eta\cap\partial K$ (blue point).} 	\label{fig:cutElement}
\vspace{-7mm}
\end{figure}

\subsection{The well-posedness of \eqref{weak-nh}}

Now we prove that the discrete problem \eqref{weak-nh} has a unique solution in each time step.
First we make a mild assumption on the finite element mesh. It can be satisfied
if $\Ct_h$ is fine and the deformation of the domain is moderate.
\vspace{1mm}

\begin{center}
\fbox{\parbox{0.98\textwidth}{
\begin{assumption}\label{ass-3}
There exist an integer $I>0$ and a $\gamma>0$ which are
independent of $h$ and $\tau$, such that, for any $K\in\Ct_{h,B}^n$,
one can find at most $I$ elements $\{K_j\}_{j=1}^I\subset\Ct^n_h$
satisfying $K_1=K$, $K_{j-1}\cap K_j\in\Ce^n_{h,B}$ for $1< j\le I$,
and that $K_I\cap\Omega^n_{\eta}$ contains a disk of radius $\gamma h$
  (see Fig.~\ref{fig:KI}).
\end{assumption}}}
\end{center}

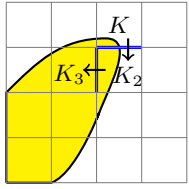
\begin{figure}[http!]
	\centering
\begin{tikzpicture}[scale =1.2]
\draw [thick,fill=yellow] (0,0)--(0,0.5*2) to [out=45,in=180](0.5*2+0.1,0.5*3+0.1)
		to [out=0, in=70] (1,0.7) to [out=70,in=20] (0.5,0)--(0,0);
		\node[align=right] at (1.25,1.75) {\small $K$};
		\node[align=right] at (1.35,1.18) {\small $K_2$};
		\node[align=right] at (0.7,1.2) {\small $K_3$};

		\draw[blue,very thick] (1,1.5)--(1.5,1.5);
		\draw[blue,very thick] (1,1.5)--(1,1);


		\draw [step =0.5cm,gray,thin] (0,0) grid (2cm,2cm);
		\draw[thick,->] (1.35,1.6)--(1.35,1.35);
		\draw[thick,->] (1.1,1.25)--(0.85,1.25);
\end{tikzpicture}
\caption{An illustration of Assumption~\ref{ass-3} for $I=3$: $K_3$
  contains a disk of radius $\gamma h$.}
\label{fig:KI}
\vspace{-5mm}
\end{figure}

To prove the well-posedness of \eqref{weak-nh}, we define the mesh-dependent norms
\begin{align*}
  &\TN{v}_{\Omega^n_{\eta}}  = \big(\SNHone[\Omega^n_{\eta}]{v}^2
    +h^{-1}\NLtwo[\Gamma^n_{\eta}]{v}^2
    + h\NLtwo[\Gamma^n_{\eta}]{\partial_{\Bn}v}^2\big)^{1/2} ,\\
  &\TN{v}_{\Ct^n_h}  = \big(\SNHone[\Omega^n_{\eta}]{v}^2
    +\mathscr{J}^n_0(v,v) +\mathscr{J}^n_1(v,v)\big)^{1/2} ,\\
  &\N{v}_{*,\Omega_h^n} =  \big(\SNHone[\Omega_h^n]{v}^2
    +h^{-1}\NLtwo[\Gamma^n_{\eta}]{v}^2  \big)^{1/2}.
\end{align*}

\begin{lemma} \label{lem:norm-eq}
Let Assumption~\ref{ass-3} be satisfied. Then for any $v_h\in V_h^{n}$,
\begin{align}	
\NLtwo[\Omega_h^n]{v_h}^2 &\,\lesssim \NLtwo[\Omega^n_{\eta}]{v_h}^2
      +h^2\mathscr{J}_1^n(v_h,v_h), 	\label{norm-eq0}\\
\TN{v_h}_{\Omega^n_{\eta}} &\,\lesssim \TN{v_h}_{\Ct^n_h}
    \eqsim \N{v_h}_{*,\Omega_h^n}.  \label{norm-eq1}
\end{align}
\end{lemma}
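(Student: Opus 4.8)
The plan is to derive both estimates from a single \emph{element-hopping} bound that transfers control of $v_h$ from a boundary element $K\in\Ct^n_{h,B}$, on which $\mathrm{area}(K\cap\Omega^n_{\eta})$ may be arbitrarily small, to a genuinely interior element where norm equivalence on $Q_k$ is available. Fix $K\in\Ct^n_{h,B}$ and let $K=K_1,\dots,K_I\subset\Ct^n_h$ be the chain from (A2). Any $K'\in\Ct^n_h\setminus\Ct^n_{h,B}$ has $\mathrm{area}(K'\cap\Omega^n_{\eta})>0$ and $\mathrm{length}(K'\cap\Gamma^n_\eta)=0$, so a standard argument using the Lipschitz continuity of $\Gamma^n_\eta$ forces $K'\subset\bar\Omega^n_{\eta}$, whence $K'\cap\Omega^n_{\eta}$ already contains a disk of radius $h/2$; consequently we may truncate the chain so that $K_1,\dots,K_{I-1}\in\Ct^n_{h,B}$, which makes every shared edge $E_j:=K_j\cap K_{j+1}$ ($1\le j\le I-1$) an interior edge of a boundary element, i.e. $E_j\in\Ce^n_{h,B}$.

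For one hop across $E_j$, let $\tilde p\in Q_k$ denote the polynomial extension of $v_h|_{K_j}$ to $K_{j+1}$ and set $s:=v_h|_{K_{j+1}}-\tilde p\in Q_k(K_{j+1})$. Since $v_h\in\Hone[\tilde\Omega^n]$ is continuous across $E_j$, $s$ together with all its tangential derivatives vanishes on $E_j$, while $\partial_{\Bn}^l s|_{E_j}$ equals, up to sign, the scalar jump $\jump{\partial_{\Bn}^l v_h}$ for $l=1,\dots,k$; moreover a $Q_k$-polynomial vanishing on a side of a square is uniquely determined by its first $k$ normal traces on that side. Hence on the reference square $\NLtwo[\hat K]{\hat s}^2\lesssim\sum_{l=1}^k\NLtwo[\hat E]{\partial_{\Bn}^l\hat s}^2$, and a scaling argument together with the (finite-dimensional) $L^2$-stability of the polynomial extension gives
\[
  \NLtwo[K_{j+1}]{v_h}^2\;\lesssim\;\NLtwo[K_j]{v_h}^2
  \;+\;h^2\sum_{l=1}^k h^{2l-1}\,\NLtwo[E_j]{\jump{\partial_{\Bn}^l v_h}}^2 .
\]
At the interior end, $v_h|_{K_I}$ is a polynomial and $K_I\cap\Omega^n_{\eta}$ contains a disk of radius $\gamma h$, so $\NLtwo[K_I]{v_h}^2\lesssim\NLtwo[K_I\cap\Omega^n_{\eta}]{v_h}^2$. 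Iterating over the boundedly many hops (constants stay uniform in $h,\tau,n$ because $I$ is bounded) yields, with $\omega_K:=K_I\cap\Omega^n_{\eta}$,
\[
  \NLtwo[K]{v_h}^2\;\lesssim\;\NLtwo[\omega_K]{v_h}^2
  \;+\;h^2\sum_{j=1}^{I-1}\sum_{l=1}^k h^{2l-1}\,\NLtwo[E_j]{\jump{\partial_{\Bn}^l v_h}}^2 .
\]
The identical argument with $\NLtwo[\omega]{\cdot}$ replaced throughout by $\SNHone[\omega]{\cdot}$ (the extension is $H^1$-stable, $\nabla v_h|_{K_I}$ is a polynomial, and $|s|_{H^1(K_{j+1})}^2$ now scales like $|\hat s|_{H^1(\hat K)}^2$ rather than $h^2\NLtwo[\hat K]{\hat s}^2$) gives the same bound for $\SNHone[K]{v_h}^2$ with the prefactor $h^2$ deleted.

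Summing the $L^2$ estimate over $K\in\Ct^n_{h,B}$ and adding the interior elements (for which $\NLtwo[K]{v_h}=\NLtwo[K\cap\Omega^n_{\eta}]{v_h}$), and using that each $K'\in\Ct^n_h$ serves as an $\omega_K$ and each $E\in\Ce^n_{h,B}$ lies on a chain for at most $C(I)$ indices $K$, we obtain \eqref{norm-eq0}; the $\SNHone[\cdot]{\cdot}$ version gives in the same way $\SNHone[\tilde\Omega^n]{v_h}^2\lesssim\SNHone[\Omega^n_{\eta}]{v_h}^2+\mathscr{J}^n_1(v_h,v_h)$. To prove $\TN{v_h}_{\Omega^n_{\eta}}\lesssim\TN{v_h}_{\Ct^n_h}$ we bound the three terms of $\TN{v_h}_{\Omega^n_{\eta}}^2$ separately: $\SNHone[\Omega^n_{\eta}]{v_h}^2$ appears verbatim in $\TN{v_h}_{\Ct^n_h}^2$; $h^{-1}\NLtwo[\Gamma^n_\eta]{v_h}^2=\gamma_0^{-1}\mathscr{J}^n_0(v_h,v_h)$; and for the last term we apply Lemma~\ref{lem:trace} to the polynomial components of $\nabla v_h$ on each $K\in\Ct^n_{h,B}$ and then an inverse inequality to get $h\NLtwo[K\cap\Gamma^n_\eta]{\partial_{\Bn}v_h}^2\lesssim\SNHone[K]{v_h}^2$, after which the $\SNHone[\cdot]{\cdot}$ element estimate and summation over $\Ct^n_{h,B}$ give $h\NLtwo[\Gamma^n_\eta]{\partial_{\Bn}v_h}^2\lesssim\SNHone[\Omega^n_{\eta}]{v_h}^2+\mathscr{J}^n_1(v_h,v_h)$.

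It remains to show $\TN{v_h}_{\Ct^n_h}\eqsim\N{v_h}_{*,\tilde\Omega^n}$. The bound $\TN{v_h}_{\Ct^n_h}\lesssim\N{v_h}_{*,\tilde\Omega^n}$ is immediate for the $H^1$-seminorm and the $\mathscr{J}^n_0$ term, while for $\mathscr{J}^n_1$ we use, for each $E\in\Ce^n_{h,B}$ with neighbouring squares $K_1,K_2\in\Ct^n_h$, the polynomial trace and inverse inequalities $h^{2l-1}\NLtwo[E]{\jump{\partial_{\Bn}^l v_h}}^2\lesssim h^{2l-2}\big(\NLtwo[K_1]{\partial_{\Bn}^l v_h}^2+\NLtwo[K_2]{\partial_{\Bn}^l v_h}^2\big)\lesssim\SNHone[K_1]{v_h}^2+\SNHone[K_2]{v_h}^2$, and sum over $E$. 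Conversely, $\N{v_h}_{*,\tilde\Omega^n}\lesssim\TN{v_h}_{\Ct^n_h}$ follows from $h^{-1}\NLtwo[\Gamma^n_\eta]{v_h}^2=\gamma_0^{-1}\mathscr{J}^n_0(v_h,v_h)$ and the inequality $\SNHone[\tilde\Omega^n]{v_h}^2\lesssim\SNHone[\Omega^n_{\eta}]{v_h}^2+\mathscr{J}^n_1(v_h,v_h)$ established above. The main obstacle is the hopping estimate itself: correctly matching $\partial_{\Bn}^l s$ on $E_j$ with the penalized jumps $\jump{\partial_{\Bn}^l v_h}$, tracking the exact powers of $h$ so that precisely the weights of $\mathscr{J}^n_1$ emerge (and only a single extra $h^2$ in the $L^2$ case), and organizing the summation over chains through the bounded-overlap property guaranteed by (A2); once this is in place, the rest reduces to standard trace, inverse, and scaling estimates on $Q_k$.
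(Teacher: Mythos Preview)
Your approach is essentially the same as the paper's: an element-hopping estimate along the chain from (A2), norm equivalence on the interior disk, summation with bounded overlap, and then trace/inverse inequalities for the remaining parts of \eqref{norm-eq1}. The paper simply cites \cite[Lemma~5.1]{mas14} for the one-hop inequality, whereas you re-derive it via the polynomial extension $\tilde p$ and the difference $s$; your justification that the chain may be truncated so that all intermediate edges lie in $\Ce^n_{h,B}$ is a point the paper leaves implicit.

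There is one small but genuine slip: as written, you extend $v_h|_{K_j}$ to $K_{j+1}$ and obtain $\NLtwo[K_{j+1}]{v_h}^2\lesssim\NLtwo[K_j]{v_h}^2+\ldots$, which iterates the \emph{wrong} way (it bounds $K_I$ by $K_1$, not $K=K_1$ by $K_I$). The fix is immediate---swap the roles of $K_j$ and $K_{j+1}$, i.e.\ extend $v_h|_{K_{j+1}}$ to $K_j$---and then your iterated estimate and everything downstream is correct.
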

\begin{proof}
For each $K\in\Ct^n_{h,B}$, by Assumption~\ref{ass-3},
there exist (at most) $I$ elements $K_1=K,K_2,\cdots,K_I$
such that $E_j=K_{j-1}\cap K_j\in\Ce^n_{h,B}$, $1<j\le I$, and that
$K_I\cap\Omega^n_{\eta}$ contains a disk of radius $\gamma h$.
By \cite[Lemma~5.1]{mas14}, we have
\ben
\NLtwov[K_{j-1}]{\nabla^\mu v_h}^2  \lesssim  \NLtwov[K_{j}]{\nabla^\mu v_h}^2
  + \sum_{l=1}^{4}h^{2(l-\mu)+1}\int_{E_j}\jump{\partial_\Bn^l v_h}^2,
  \quad \mu =0,1.
\een
Since $K_I\cap\Omega^n_{\eta}$ contains a disk
of radius $\gamma h$, the norm equivalence shows
\begin{eqnarray}
    \NLtwov[K]{\nabla^\mu v_h}^2
    \lesssim  \NLtwov[K_I\cap\Omega^n_{\eta}]{\nabla^\mu v_h}^2
               + \sum_{j=2}^{I}\sum_{l=1}^{4} h^{2(l-\mu)+1}
               \int_{E_j}\jump{\partial_\Bn^l v_h}^2 ,
               \quad \mu=0,1. \label{ieq:vh0}
  \end{eqnarray}
Take the sum of \eqref{ieq:vh0} over all $K\in\Ct^n_{h,B}$.
Letting $\mu=0$ shows \eqref{norm-eq0}
and letting $\mu=1$ shows
\begin{equation}\label{vh-H1}
\SNHone[\Omega^n_h]{v_h}^2 \lesssim
    \SNHone[\Omega^n_{\eta}]{v_h}^2
    +\mathscr{J}_1^n(v_h,v_h) .
\end{equation}
Then we have $\N{v_h}_{*,\Omega_h^n}\lesssim \TN{v_h}_{\Ct^n_h}$.
Inverse estimates also yield
$\mathscr{J}_1^n(v_h,v_h)\lesssim \SNHone[\Omega_h^n]{v_h}^2$.
This shows
$\TN{v_h}_{\Ct^n_h}\eqsim \N{v_h}_{*,\Omega_h^n}$.
  The proof of
  $\TN{v_h}_{\Omega^n_{\eta}}\lesssim \N{v_h}_{*,\Omega_h^n}$
  is similar.
\end{proof}

\begin{theorem}\label{thm:exist}
Let Assumption~\ref{ass-3} be satisfied and suppose $\gamma_0$ is large enough.
Problem \eqref{weak-nh} has a unique solution $u^n_h\in V_h^{n}$ in each time step.
\end{theorem}
\begin{proof}
From \cite[Lemma~1 and Appendix A]{guz18}, we have the trace inequality, for any $K\in \Ct_h^n$,
\begin{align}	\label{ieq:trace0}
    \NLtwo[\partial K]{v} + \NLtwo[K\cap \Gamma_{\eta}^n]{v}\lesssim
    h^{-1/2}\NLtwo[K]{v}+h^{1/2}\SNHone[K]{v},
    \quad \forall\,v\in H^1(K).
\end{align}
Using the Cauchy-Schwarz inequality, it is standard to show the coercivity and continuity of $\mathscr{A}^n_h$: \begin{align}\label{eq:Ah}
\mathscr{A}_h^n(v_h,v_h) \gtrsim \TN{v_h}_{\Ct^n_h}^2, \quad
\SN{\mathscr{A}_h^n (u_h,v_h)} \lesssim
\TN{u_h}_{\Ct^n_h}\TN{v_h}_{\Ct^n_h},\quad
    \forall\,u_h,v_h\in {V^n_h}.
\end{align}
The details are omitted here.
The Lax-Milgram lemma shows that \eqref{weak-nh} has a unique solution.
\end{proof}

\section{The stability of numerical solutions}
\label{sec:well_posed}

The core difficulty in proving stability and convergence
is the fact that $U^{n-i,n}_h=u^{n-i}_h\circ\BX^{n,n-i}_\tau\notin
V^n_h$; we overcome this difficulty by introducing a modified Ritz projection operator
that projects $U^{n-i,n}_h$ into $V^n_h$.

\subsection{The modified Ritz projection}\label{sec:modified Ritz projection}

It is easy to see that
\begin{equation}\label{eq:Y}
U^{n-i,n}_h\in Y(\Omega_\eta^{n}) := \big\{v\in \Hone[\Omega^n_{\eta}]:
\TN{v}_{\Omega^n_{\eta}}<\infty\big\},\quad 0\le i\le 4.
\end{equation}
The modified Ritz projection operator
$\Cp^n_h:Y(\Omega^n_{\eta})\to V^n_h$ is defined as
\begin{align}	\label{eq:Pnh}
  \mathscr{A}^n_h(\Cp^n_hw,v_h)  =  a^n_h(w,v_h), \quad
  \forall\, v_h\in V^n_h,
\end{align}
where $a^n_h(w,v):=\mathscr{A}^n_h(w,v) - \mathscr{J}_1^{n}(w,v)$.
By \eqref{eq:Ah}, Lemma~\ref{lem:norm-eq}, and standard techniques for finite element error estimates,
it is easy to prove the results
\begin{align}
&\TN{\Cp^n_h v}_{\Omega^n_{\eta}} + \TN{\Cp^n_hv}_{\Ct^n_h}
    \lesssim \TN{v}_{\Omega^n_{\eta}},\quad \forall\,v\in Y(\Omega_\eta^n),
    \label{ieq:Rh-stab} \\
&\TN{w-\Cp^n_h w}_{\Omega^n_{\eta}}+ \TN{w-\Cp^n_h w}_{\Ct^n_h}
    +\dN{w-\mathcal{P}^n_h w}_{*,\Omega^n_h}
\lesssim  h^4\SN{w}_{H^5(D)},\quad \forall\,w\in H^5(D).
    \label{ieq:Ritz-1}
\end{align}
Below we only prove the $L^2$-error estimates by duality argument.

\begin{lemma}\label{lem:Ritz-0}
Suppose Assumption~\ref{ass-3} holds. Then
  \begin{align}
    &\NLtwo[\Omega^n_{\eta}]{v-\Cp^n_h v}
      \lesssim  h \TN{v}_{\Omega^n_{\eta}}, \quad
      \forall\, v\in Y(\Omega^n_{\eta}), 	\label{Rw-0} \\
    &\NLtwo[\Omega^n_{\eta}]{v-\Cp^n_h v}
      \lesssim  h^5\SN{v}_{H^5(\Omega^n_{\eta})}, \quad
      \forall\, v\in H^5(\Omega^n_{\eta}).
      \label{Rw-1}
  \end{align}
\end{lemma}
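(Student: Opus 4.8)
The proof will run an Aubin--Nitsche duality argument in which the boundary-zone penalty $\mathscr{J}_1^n$, being inconsistent, is carried along as a higher-order remainder. Throughout write $e:=w-\Cp^n_h w$ and recall that $\mathscr{A}^n_h=a^n_h+\mathscr{J}_1^n$. First I introduce the dual solution $\phi\in\Hone[\Omega^n_{\eta}]\cap H^2(\Omega^n_{\eta})$ of $-\Delta\phi=e$ in $\Omega^n_{\eta}$, $\phi=0$ on $\Gamma^n_{\eta}$. By (A1) and Theorem~\ref{thm:bdr} the spline $\chibf_n$ is $\BC^2$ with norm bounded uniformly in $n$ once $\eta,\tau$ are small, so the $\Omega^n_{\eta}$ are uniformly $C^{1,1}$; hence elliptic regularity gives $\N{\phi}_{H^2(\Omega^n_{\eta})}\lesssim\NLtwo[\Omega^n_{\eta}]{e}$ and there is a Stein extension $E\phi\in H^2(D)$ with $\N{E\phi}_{H^2(D)}\lesssim\NLtwo[\Omega^n_{\eta}]{e}$, the constants being independent of $n$. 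Integrating $\int_{\Omega^n_{\eta}}e(-\Delta\phi)$ by parts and using $\phi|_{\Gamma^n_{\eta}}=0$ inside $\mathscr{S}^n_h$ and $\mathscr{J}_0^n$, I obtain the identity
\begin{equation*}
  \NLtwo[\Omega^n_{\eta}]{e}^2 = a^n_h(e,\phi),
\end{equation*}
which uses only $e\in\Hone[\Omega^n_{\eta}]$ and $\phi\in H^2(\Omega^n_{\eta})$ and is therefore valid already for $w\in Y(\Omega^n_{\eta})$.

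The definition \eqref{eq:Pnh} of $\Cp^n_h$ rewrites as $a^n_h(e,v_h)=\mathscr{J}_1^n(\Cp^n_h w,v_h)$ for all $v_h\in V(k,\Ct^n_h)$. Testing with the Scott--Zhang interpolant $\Ci_h(E\phi)\in V(k,\Ct^n_h)$ and inserting into the identity gives
\begin{equation*}
  \NLtwo[\Omega^n_{\eta}]{e}^2 = a^n_h\big(e,\,E\phi-\Ci_h(E\phi)\big)+\mathscr{J}_1^n\big(\Cp^n_h w,\,\Ci_h(E\phi)\big).
\end{equation*}
The first term I bound termwise: the volume part by Cauchy--Schwarz and $\SN{E\phi-\Ci_h(E\phi)}_{H^1(\Omega^n_{\eta})}\lesssim h\N{E\phi}_{H^2(D)}$; the $\mathscr{S}^n_h$ and $\mathscr{J}_0^n$ parts by Lemma~\ref{lem:trace}, the interpolation estimates \eqref{int-errK}--\eqref{int-errE}, and the trace contributions $h^{-1/2}\NLtwo[\Gamma^n_{\eta}]{e}+h^{1/2}\NLtwo[\Gamma^n_{\eta}]{\partial_{\Bn}e}\le\TN{e}_{\Omega^n_{\eta}}$. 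Altogether $\big|a^n_h(e,E\phi-\Ci_h(E\phi))\big|\lesssim h\,\TN{e}_{\Omega^n_{\eta}}\NLtwo[\Omega^n_{\eta}]{e}$.

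The term $\mathscr{J}_1^n(\Cp^n_h w,\Ci_h(E\phi))$ is the crux, and I expect it to be the main obstacle. Cauchy--Schwarz over the edge sums bounds it by $\mathscr{J}_1^n(\Cp^n_h w,\Cp^n_h w)^{1/2}\,\mathscr{J}_1^n(\Ci_h(E\phi),\Ci_h(E\phi))^{1/2}$, and the task is to show $\mathscr{J}_1^n(\Ci_h(E\phi),\Ci_h(E\phi))\lesssim h^2\N{\phi}_{H^2(\Omega^n_{\eta})}^2$. Since $\phi$ is only $H^2$-regular, the jumps $\jump{\partial_{\Bn}^l\phi}$ for $l\ge 2$ are not even defined, so one cannot subtract $\phi$ as in a consistency argument. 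Instead, for $l=1$ I use $\jump{\partial_{\Bn}(E\phi)}=0$ (as $E\phi\in H^2(D)$) together with \eqref{int-errE}, which yields $h\int_E\jump{\partial_{\Bn}\Ci_h(E\phi)}^2=h\int_E\jump{\partial_{\Bn}(\Ci_h(E\phi)-E\phi)}^2\lesssim h^{2}\N{E\phi}_{H^2(D_E)}^2$; and for $2\le l\le k$ I estimate $\jump{\partial_{\Bn}^l\Ci_h(E\phi)}$ directly by an inverse trace inequality followed by the $H^2$-stability of $\Ci_h$ applied to the degree-$k$ polynomial $\Ci_h(E\phi)$, so that $h^{2l-1}\int_E\jump{\partial_{\Bn}^l\Ci_h(E\phi)}^2\lesssim h^{2l-1}\cdot h^{-1}\cdot h^{2(2-l)}\N{\phi}_{H^2(D_E)}^2=h^{2}\N{\phi}_{H^2(D_E)}^2$. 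Thus the large negative power $h^{2l-1}$ built into $\mathscr{J}_1^n$ exactly cancels the $l$-dependence, every one of the $k$ terms scales like $h^2\N{\phi}_{H^2}^2$, and summing over the boundary-zone edges (with finite overlap of the patches $D_E$) proves the claim, whence $\mathscr{J}_1^n(\Ci_h(E\phi),\Ci_h(E\phi))^{1/2}\lesssim h\NLtwo[\Omega^n_{\eta}]{e}$; the secondary point, namely the $n$-uniformity of the elliptic-regularity and extension constants, is handled by (A1) and Theorem~\ref{thm:bdr} as above.

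Combining the three estimates gives $\NLtwo[\Omega^n_{\eta}]{e}\lesssim h\big(\TN{e}_{\Omega^n_{\eta}}+\mathscr{J}_1^n(\Cp^n_h w,\Cp^n_h w)^{1/2}\big)$, and the two assertions differ only in how this right-hand side is bounded. For \eqref{Rw-0}, Lemma~\ref{lem:Rh-stab} gives $\TN{e}_{\Omega^n_{\eta}}\le\TN{w}_{\Omega^n_{\eta}}+\TN{\Cp^n_h w}_{\Omega^n_{\eta}}\lesssim\TN{w}_{\Omega^n_{\eta}}$ and $\mathscr{J}_1^n(\Cp^n_h w,\Cp^n_h w)^{1/2}\le\TN{\Cp^n_h w}_{\Ct^n_h}\lesssim\TN{w}_{\Omega^n_{\eta}}$, hence $\NLtwo[\Omega^n_{\eta}]{e}\lesssim h\TN{w}_{\Omega^n_{\eta}}$. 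For \eqref{Rw-1} I first reduce to a globally smooth datum: since $\Cp^n_h w$ depends only on $w|_{\Omega^n_{\eta}}$ and $\Cp^n_h$ reproduces $Q_k$, replacing $w$ by $\tilde w:=E(w-p)\in H^{k+1}(D)$ for a Bramble--Hilbert polynomial $p$ of degree $\le k$ leaves $e$ unchanged on $\Omega^n_{\eta}$ while $\SN{\tilde w}_{H^{k+1}(D)}\lesssim\SN{w}_{H^{k+1}(\Omega^n_{\eta})}$. For such a datum $\mathscr{J}_1^n(\tilde w,\cdot)=0$, so $\mathscr{J}_1^n(\Cp^n_h\tilde w,\Cp^n_h\tilde w)^{1/2}=\mathscr{J}_1^n(e,e)^{1/2}\le\TN{e}_{\Ct^n_h}$, and Lemma~\ref{lem:Ritz-1} bounds both $\TN{e}_{\Omega^n_{\eta}}$ and $\TN{e}_{\Ct^n_h}$ by $h^{k}\SN{\tilde w}_{H^{k+1}(D)}$; therefore $\NLtwo[\Omega^n_{\eta}]{e}\lesssim h^{k+1}\SN{w}_{H^{k+1}(\Omega^n_{\eta})}$.
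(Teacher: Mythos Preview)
Your argument is correct and follows the same Aubin--Nitsche scaffold as the paper, but you and the paper diverge at precisely the point you flag as the crux: the treatment of $\mathscr{J}_1^n(\Cp^n_h w,\tilde z_h)$.

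The paper exploits the Cartesian mesh. It interpolates the extended dual solution by the \emph{$Q_1$} Scott--Zhang operator, $\tilde z_h\in V(1,\Ct_h)$; since $\tilde z_h$ is bilinear on each square and each $E\in\Ce^n_{h,B}$ is axis-parallel, one has $\partial_{\Bn}^l\tilde z_h=0$ for $l\ge 2$, so only the $l=1$ term of $\mathscr{J}_1^n$ survives. That single term is handled exactly as you handle yours, by subtracting $\tilde z$ (whose normal derivative has no jump because $\tilde z\in H^2(D)$). Your route instead keeps the $Q_k$ interpolant and controls the terms with $l\ge 2$ via inverse trace, polynomial inverse estimate, and $H^2$-stability of Scott--Zhang. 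The cancellation $h^{2l-1}\cdot h^{-1}\cdot h^{2(2-l)}=h^2$ you observe is correct, so both approaches give the same $O(h)$ gain. The paper's trick is slicker but tied to tensor-product elements on a Cartesian grid; your argument is mesh-agnostic and would carry over to simplicial meshes.

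For \eqref{Rw-1} you add a Bramble--Hilbert reduction (subtracting a reproduced polynomial before extending) so that the right-hand side involves only the seminorm $\SN{w}_{H^{k+1}(\Omega^n_\eta)}$; the paper simply extends $w$ by Stein and invokes Lemma~\ref{lem:Ritz-1}, which formally controls $\SN{\tilde w}_{H^{k+1}(D)}$ rather than $\SN{w}_{H^{k+1}(\Omega^n_\eta)}$. Your extra step is what actually delivers the seminorm as stated.
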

\begin{proof}
Consider the auxiliary problem
  \begin{equation}\label{eq: auxiliary problem}
    -\Delta z =v- \Cp^n_h v \quad
    \text{in}\;\; \Omega^n_{\eta},\qquad
    z = 0 \quad\text{on} \;\;\Gamma^n_{\eta}.
  \end{equation}
By Theorem~\ref{lem:bdr}, $\Gamma^n_{\eta}$ is $C^2$-smooth
and its parametrization satisfies $\N{\chibf_n}_{\BC^2([0,L])}\lesssim 1$.
The regularity result for elliptic equations yields
$\N{z}_{H^2(\Omega^n_{\eta})} \le C\NLtwo[\Omega^n_{\eta}]{v-\Cp^n_h v}$.

Multiplying both sides of the equation with $v-\Cp^n_h v$ and
  integrating by parts, we have
  \begin{equation}\label{eq:w1}
    \NLtwo[\Omega^n_{\eta}]{v-\Cp^n_h v}^2
    = \int_{\Omega^n_{\eta}}\nabla z\cdot\nabla (v-\Cp^n_h v )
    -\int_{\Gamma^n_{\eta}} \frac{\partial z}{\partial\Bn} (v-\Cp^n_h v)
    = a_h(v-\Cp^n_h v,z).
  \end{equation}
Let $\tilde z\in H^2(D)$ be the Sobolev extension of $z$
to the exterior of $\Omega^n_{\eta}$  \cite{ste70}.
There is a constant $C$ depending only on $\Omega^n_{\eta}$ such that
$\N{\tilde z}_{H^2(D)} \le C\N{z}_{H^2(\Omega^n_{\eta})}
  \le C  \NLtwo[\Omega^n_{\eta}]{v-\Cp^n_hv}$.
Let $\tilde z_h$ be the linear Scott-Zhang interpolation
of $\tilde z$ on the mesh $\Ct_h$. From \cite{sco94} and using \eqref{ieq:trace0}, we have
  \begin{equation}\label{eq:z-err}
    \TN{\tilde z-\tilde z_h}_{\Omega^n_{\eta}}^2
    +\sum_{E\in\Ce_{h,B}^n}
    h\int_E \jump{\partial_\Bn(\tilde z-\tilde z_h)}^2
    \lesssim h^2\SN{\tilde z}_{H^{2}(D)}^2
    \lesssim h^2\NLtwo[\Omega^n_{\eta}]{v-\Cp^n_h v}^2.
  \end{equation}
Inserting the equality $a_h(v-\Cp^n_hv,\tilde z_h) = \mathscr{J}_1^n(\Cp^n_h v,\tilde z_h)$ into \eqref{eq:w1} shows
  \begin{equation} \label{eq:ah-err-0}
    \NLtwo[\Omega^n_{\eta}]{v-\Cp^n_h v}^2
    = a_h(v-\Cp^n_h v,\tilde z-\tilde z_h)
    +\mathscr{J}_1^n(\Cp^n_h v,\tilde z_h).
  \end{equation}
Applying \eqref{ieq:Rh-stab} and \eqref{eq:z-err} shows that
  \begin{equation}\label{ah-err}
    a_h(v-\Cp^n_h v,\tilde z-\tilde z_h)
    \lesssim \TN{v-\Cp^n_h v}_{\Omega^n_{\eta}}
    \TN{\tilde z-\tilde z_h}_{\Omega^n_{\eta}}
    \lesssim h\TN{v}_{\Omega^n_{\eta}}
    \NLtwo[\Omega^n_{\eta}]{v-\Cp^n_h v}.
  \end{equation}
For each $E\in\Ce^n_{h,B}$, $\tilde z_h|_E$ is a linear function of either $x$ or $y$.
By \eqref{eq:z-err} and \eqref{ieq:Rh-stab}, we have
  \begin{align}
    \SN{\mathscr{J}_1^n(\Cp^n_h v,\tilde z_h)}
    = \Big|\sum_{E\in \Ce_{h,B}^n}
         h\int_E \jump{\partial_{\Bn}(\Cp^n_h v)}
         \jump{\partial_\Bn(\tilde z_h-\tilde z)}\Big|
    \lesssim h\TN{v}_{\Omega^n_{\eta}}
		\NLtwo[\Omega^n_{\eta}]{v-\Cp^n_h v}.
		\label{eq:ah-err-1}
  \end{align}
We get \eqref{Rw-0} by inserting \eqref{ah-err} and \eqref{eq:ah-err-1} into \eqref{eq:ah-err-0}.

Next let $\tilde{v}\in H^5(D)$ be the Sobolev extension of $v\in H^5(\Omega^n_{\eta})$.
Then $\jump{\partial_\Bn \tilde v}=0$ on each $E\in \Ce_{h,B}^n$.
Applying \eqref{eq:ah-err-0}, \eqref{eq:z-err}, and \eqref{ieq:Ritz-1} sequentially,
we find that
  \begin{align*}
    \NLtwo[\Omega^n_{\eta}]{v-\Cp^n_h v}^2
    \le\,& \TN{v-\Cp^n_h v}_{\Omega^n_{\eta}}
           \TN{\tilde z-\tilde z_h}_{\Omega^n_{\eta}}
           + \Big|\sum_{E\in\Ce_{h,B}^n} h\int_E
           \jump{\partial_{\Bn}(\tilde v-\Cp^n_h \tilde v)}
           \jump{\partial_\Bn(\tilde z-\tilde z_h)}\Big|\\
    \lesssim\, &  h^5 \SN{\tilde v}_{H^5(D)}
                 \NLtwo[\Omega^n_{\eta}]{v-\Cp^n_h v} .
  \end{align*}
  This finishes the proof.
\end{proof}

\subsection{The stability}

Now we are ready to prove the stability of numerical solutions. \vspace{1mm}

\begin{theorem} \label{thm:uh-stab}
Suppose Assumptions~\ref{ass-2} and \ref{ass-3} hold and that
the penalty parameter $\gamma_0$ is large enough in $\mathscr{A}^n_h$. Assume $\gamma_0\tau\le 1$ and $h = O(\tau)$. Then for $4\le m\le N$,
  \begin{align}\label{uh-stab}
    \NLtwo[\Omega^n_{\eta}]{u^m_h}^2
    +\sum_{n=4}^m\tau\TN{u_h^n}^2_{\Ct^n_h}
    \lesssim\,& \sum_{n=4}^m
		\tau\NLtwo[\Omega^n_\eta]{f^n}^2
		+\sum_{i=0}^3\left(
		\NLtwo[\Omega^i_\eta]{u^i_h}^2 +
		\tau\NHone[\Omega^i_h]{u^{i}_h}^2\right).
  \end{align}
\end{theorem}
\begin{proof}
Write $\tilde{U}^{n-i,n}_h=\Cp^n_h(U^{n-i,n}_h)\in V^n_h$ for convenience. We choose $v_h=2u^n_h -\tilde U^{n-1,n}_h$ as a test function in \eqref{weak-nh}. The telescope formula of BDF-4 (see \cite{liu13}) shows that
  \begin{equation}
    \sum_{i=1}^5 \NLtwo[\Omega^n_{\eta}]{\Psi_i^n}^2 -
    \sum_{i=1}^4 \NLtwo[\Omega^n_{\eta}]{\Phi_i^n}^2
    + \tau \mathscr{A}^n_h(u_h^n,2u_h^n-\tilde U^{n-1,n}_h)
    = A^n_1 +A^n_2,   \label{eq:stab0}
  \end{equation}
where $A^n_1= \tau \big(f^n,2u^n_h -\tilde U^{n-1,n}_h\big)_{\Omega^n_{\eta}}$,
$A_2^n=\sum_{i=0}^4 \lambda_i\big(U_h^{n-i,n}, \tilde U^{n-1,n}_h - U^{n-1,n}_h\big)_{\Omega^n_{\eta}}$, and
\begin{align*}
\Psi_i^n =\sum\limits_{j=1}^{i} c_{i,j} U_h^{n+1-j,n}, \qquad
\Phi_i^n = \sum\limits_{j=1}^{i} c_{i,j} U_h^{n-j,n}
=\Psi_i^{n-1}\circ \BX_{\tau}^{n,n-1}.
\end{align*}
The coefficients $c_{i,j}$ are real and given in \cite[Table~2.2]{liu13}.
We extend the discrete solutions to $D$ according to \eqref{extension} such that $u^n_h\in V_h$. By Lemma~\ref{lem:Lh} , $\Phi_i^n$ can be estimated as follows
\begin{align}\label{stab-est0}
\NLtwo[\Omega_{\eta}^n]{\Phi_i^n}^2 \le (1+C\tau)
\NLtwo[\Omega_{\eta}^{n-1}]{\Psi_i^{n-1}}^2
+ C\tau \sum_{j=1}^i\dN{u_h^{n-j}}_{\Ltwo[\Omega_h^{n-j}]}^2 .
\end{align}

Next we summarize from Lemmas~\ref{lem:uX} and \ref{lem:Uh} that, for $\mu=0,1$,
\begin{align}
&\dN{U^{n-j,n}_h}^2_{\Ltwo[\Gamma^n_\eta]}
 \le (1+C\tau)\dN{u^{n-j}_h}^2_{\Ltwo[\Gamma^{n-j}_\eta]}
 +C\tau^6h^{-2}\dN{u^{n-j}_h}^2_{\Hone[\Omega^{n-j}_{h}]}, \label{normL2-Gamma}\\
&\big|U^{n-j,n}_h\big|^2_{\Hone[\Gamma^n_\eta]}
 \lesssim h^{-1}\dN{u^{n-j}_h}^2_{H^1(\Omega_h^{n-j})}, \label{normH1-Gamma} \\
&\dN{\nabla^\mu U^{n-j,n}_h}^2_{\Ltwov[\Omega^{n}_\eta]}
\le (1+C\tau) \dN{\nabla^\mu u^{n-j}_h}^2_{\Ltwov[\Omega^{n-j}_\eta]}
+C\tau^6h^{-1}\dN{\nabla^\mu u^{n-j}_h}^2_{\Ltwov[\Omega^{n-j}_h]}. \label{eq:stab uh back1}
\end{align}
Inserting \eqref{normL2-Gamma}--\eqref{eq:stab uh back1} into
\eqref{norm-eq1} yields $\tN{U^{n-j,n}_h}_{\Omega^n_\eta}\lesssim
\tN{u^{n-j}_h}_{\Ct^{n-j}_h}$.
Remember that $\gamma_0\tau\le 1$ and $h = O(\tau)$. Using the definition of $\Cp^n_h$ and the Cauchy-Schwarz inequality,
we have
\begin{align}
\mathscr{A}^n_h(u_h^n,2u_h^n-\tilde U^{n-1,n}_h)
    =\,&2\mathscr{A}^n_h(u_h^n,u_h^n)
         -a^n_h(u_h^n, U^{n-1,n}_h)  \notag \\
\ge\,& \frac{3}{2}\TN{u_h^n}^2_{\Ct_h^n}
    -\frac{3\gamma_0}{5h}\NLtwo[\Gamma^{n}_\eta]{u^{n}_h}^2
    -C\gamma_0^{-1}\SNHone[\Omega_h^n]{u^n_h}^2
    -\frac{1}{2}\big|U^{n-1,n}_h\big|_{\Hone[\Omega^{n}_\eta]}^2  \notag \\
    &- C\gamma_0^{-1}\NHone[\Omega_h^{n-1}]{u^{n-1}_h}^2
     -\frac{3\gamma_0}{5h}\dN{U^{n-1,n}_h}_{\Ltwo[\Gamma^{n}_\eta]}^2 \notag \\
\ge\,&(0.9-C\gamma_0^{-1})\TN{u_h^n}^2_{\Ct_h^n}
    -(0.6+C\gamma_0^{-1})\TN{u^{n-1}_h}_{\Ct^{n-1}_h}^2. \label{stab-est1}
  \end{align}
By \eqref{Rw-0} and $\tN{U^{n-1,n}_h}_{\Omega^n_\eta}\lesssim
\tN{u^{n-1}_h}_{\Ct^{n-1}_h}$, the right hand side of \eqref{eq:stab0} satisfies
\begin{align}
A^n_1 \le\,& 2\tau\NLtwo[\Omega^n_\eta]{f^n}^2
		+\tau\NLtwo[\Omega^n_\eta]{u^n_h}^2
		+2\tau\dN{U^{n-1,n}_h}_{\Ltwo[\Omega^n_\eta]}^2
		+C \tau^3\tN{U^{n-1,n}_h}_{\Omega^n_\eta}^2, \label{stab-est2}\\
A^n_2 \le \,& C\tau\sum_{j=0}^4\dN{U^{n-j,n}_h}_{\Ltwo[\Omega^n_{\eta}]}^2
+\frac{\tau}{10}\tN{u^{n-1}_h}_{\Ct^{n-1}_h}^2.  \label{stab-est3}
  \end{align}

Substituting \eqref{stab-est0} and \eqref{stab-est1}--\eqref{stab-est3} into \eqref{eq:stab0},
we can find two positive constants $C_0$ and $C_1$ which are independent of $\eta$, $\tau$, and $h$, such that
\begin{align}
&\sum_{i=1}^4\Big[\NLtwo[\Omega^n_{\eta}]{\Psi^{n}_i}^2
-\NLtwo[\Omega^{n-1}_{\eta}]{\Psi_i^{n-1}}^2\Big]
+0.9\tau \TN{u_h^n}^2_{\Ct_h^n}  -0.7\tau\TN{u^{n-1}_h}_{\Ct^{n-1}_h}^2\notag \\
\le\,& C_0\tau\sum_{j=0}^4 \dN{u^{n-j}_h}_{\Ltwo[\Omega_\eta^{n-j}]}^2
+C_1\gamma_0^{-1}\tau \sum_{j=0}^4 \tN{u^{n-j}_h}_{\Ct^{n-j}_h}^2
+ 2\tau \NLtwo[\Omega^n_\eta]{f^n}^2.
      \label{eq:stab1}
\end{align}
Taking the sum of \eqref{eq:stab1} over $4\le n\le m$ and letting
$5C_1\gamma_0^{-1}\le 0.1$, we obtain
\begin{align}
\sum_{i=1}^4\NLtwo[\Omega_\eta^m]{\Psi_i^m}^2
+\tau\sum_{n=4}^m\TN{u_h^n}^2_{\Ct_h^n}
\lesssim \sum_{j=0}^3\tau\tN{u_h^{j}}^2_{\Ct_h^{j}}
+\sum_{n=0}^m\tau\left(\NLtwo[\Omega^n_\eta]{u^n_h}^2
+\NLtwo[\Omega^n_\eta]{f^n}^2\right).
\end{align}
Since $\Psi_1^m= 0.06u^m_h$ by \cite[Table~2.2]{liu13},
the proof is finished by using Gronwall's inequality.
\end{proof}

\section{A priori error estimates}
\label{sec:err}

The purpose of this section is to establish the a priori error estimates
for finite element solutions.

\subsection{Extended solution}
Since $\Omega^n_\eta\backslash\Omega_{t_n}\ne \emptyset$ in general,
we follow Lehrenfeld and Olshanskii \cite{leh19} to
extend the exact solution $u$ to the exterior of $\Omega_t$.
For convenience, we define
$Q_T=\left\{(\Bx,t):\Bx\in\Omega_t, t\in [0,T]\right\}$ and
\ben
L^\infty(0,T;H^m(\Omega_t))= \Big\{v\in\Ltwo[Q_T]:
\esssup_{t\in [0,T]}\N{v(\BX(t;0,\cdot),t)}_{H^m(\Omega_0)}
<+\infty\Big\}, \quad m\ge 0.
\een
By \cite[Chapter~6]{ste70}, there is an extension operator
$\mathsf{E}_0$: $H^5(\Omega_0)\to H^5(\bbR^2)$
such that
\begin{equation*}
	\left(\mathsf{E}_0 w\right)|_{\Omega_0} =w,\quad
	\|\mathsf{E}_0 w\|_{H^5(\bbR^2)}\lesssim \|w\|_{H^5(\Omega_0)},
    \quad \forall\,w\in H^5(\Omega_0).
\end{equation*}
Since $\BX(t;0,\cdot)$ is one-to-one, its inverse is denoted by $\BX(0;t,\cdot)$.
Then $\Omega_0=\BX(0;t,\Omega_t)$. We can define
an extension operator from $H^5(\Omega_t)$ to $H^5(\bbR^2)$ as
\ben
\mathsf{E}_t w :=
\big[\mathsf{E}_0\big(w\circ\BX(t;0,\cdot)\big)\big]\circ \BX(0;t,\cdot).
\een
The global extension operator
$\mathsf{E}$: $L^\infty(0,T;H^5(\Omega_t))\to L^\infty(0,T;H^5(\bbR^2))$
is defined as
\ben
(\mathsf{E}v)(t)=\mathsf{E}_t(v(t))\qquad \forall\, t\in [0,T].
\een
By \eqref{eq:Jnni} and arguments similar to \cite{leh19},
we have the stability estimates for the extension operator
\begin{align}\label{ieq:extension}
\begin{cases}
\|\mathsf{E} v\|_{H^5(\bbR^2\times [0,T])} \le C \|v\|_{H^5(Q_T)},
    \vspace{1mm}\\
\|(\mathsf{E} v)(t)\|_{H^{m}(\bbR^2)} \le
    C \|v(t)\|_{H^{m}(\Omega_t)}, \quad 1\leq m\leq 5,
     \vspace{1mm} \\
\|\partial_t(\mathsf{E} v)(t)\|_{H^1(\bbR^2)}
\le C \left[\|v(t)\|_{H^2(\Omega_t)}
  +\|(\partial_t v)(t)\|_{H^1(\Omega_t)} \right],
\end{cases}
\end{align}
where the constant $C>0$ depends only on $\Omega_0$
and $\N{\Bw}_{\BC^4(\bbR^2\times [0,T])}$.

Let $u$ be the exact solution to \eqref{cd-model} and define $\tilde{u}=\mathsf{E} u$,
$u^n:=\tilde u(t_n)$, and $U^{m,n}:= u^m\circ \BX^{n,m}$.
Multiplying $\sum_{i=0}^4 \lambda_iU^{n-i,n} -\tau\Delta u^n$ with $v_h\in V^n_h$
and using integration by parts, we have
\begin{equation}\label{eq:weakU}
\frac{1}{\tau}\sum_{i=0}^4 \lambda_i\big(U^{n-i,n}, v_h\big)_{\Omega^n_{\eta}}
+a^n_h(u^n,v_h) =\int_{\Gamma_\eta^n} u^n\Big(\frac{\gamma_0}{h}v_h-\partial_{\Bn} v_h\Big)
    +(\tilde{f}^n+R^n,v_h)_{\Omega^n_{\eta}},
\end{equation}
where
$\tilde{f}^n =\frac{\partial\tilde u}{\partial t}(t_n)
+ \Bw(t_n)\cdot\nabla u^n-\Delta u^n$ and
$R^n= \tau^{-1}\sum_{i=0}^4 \lambda_iU^{n-i,n}- \frac{\partial\tilde u}{\partial t}(t_n)
- \Bw(t_n)\cdot\nabla u^n$.

\subsection{Error estimates}
Now we present the main theorem of this section. Suppose that
the exact solution $u$ and the source function $f$ satisfy
\begin{align*}
M_u:=\N{u}_{H^5(Q_T)}^2 +\N{u}_{L^\infty(0,T;H^5(\Omega_t))}^2
+\N{\partial_t u}_{L^\infty(0,T;H^1(\Omega_t))}^2
+\N{f}_{L^\infty(0,T;H^1(D))}^2  < \infty,
\end{align*}
and that the pre-calculated initial values satisfy
\begin{equation}\label{init-err}
\NLtwo[\Omega_\eta^i]{u^i-u_h^i}^2+\tau \tN{u^i-u_h^i}_{\Ct_h^i}^2
		\le C_0\tau^8, \qquad 0\le i\le 3.
\end{equation}

\begin{theorem}\label{thm:err}
Suppose the assumptions in Theorem~\ref{thm:uh-stab} hold.
Then for any $4\leq m\leq N$,
\begin{align*}
\NLtwo[\Omega^m_\eta]{u^m-u_h^m}^2
+\sum_{n=4}^{m}\tau \TN{u^{n}-u_h^{n}}_{\Ct_h^n}^2
\lesssim\,& (C_0+M_u)\tau^8 \markChange{.}
\end{align*}
\end{theorem}
\begin{proof}
Write $\rho^n := u^n - \Cp_h^n u^n$ and $\theta_h^n := \Cp_h^n u^n - u_h^n$.
By Lemma~\ref{lem:Ritz-0}, \eqref{ieq:Ritz-1}, and \eqref{ieq:extension}, we have
\begin{equation}\label{ieq:rhon}
\max_{1\le n\le N}\NLtwo[\Omega^n_{\eta}]{\rho^n}^2
+\sum_{n=1}^N\tau \TN{\rho^n}_{\Ct^n_h}^2
\lesssim  \tau^{8} \N{u}_{L^{\infty}(0,T;H^{5}(\Omega_t))}^2.
\end{equation}
It is left to estimate $\theta_h^n$ for $4\le n\le N$.
The arguments are similar to the proof of Theorem~\ref{thm:uh-stab}.
Due to \eqref{init-err}--\eqref{ieq:rhon} and \eqref{norm-eq0}, the pre-calculated initial values satisfy
\begin{equation}\label{init-err-disc}
\NLtwo[\Omega_h^i]{\theta_h^i}^2+\tau \tN{\theta_h^i}_{\Ct_h^i}^2
\lesssim \NLtwo[\Omega_\eta^i]{\theta_h^i}^2+\tau \tN{\theta_h^i}_{\Ct_h^i}^2
		\lesssim C_0\tau^{8}, \quad 0\le i\le3.
\end{equation}

Define $\Theta_h^{m,n}=\theta_h^m\circ\BX_\tau^{n,m}$ and
$\zeta^{m,n} =(u^{m}\circ\BX_\tau^{n,m}-u^{m}\circ\BX^{n,m}) -\rho^m\circ \BX_\tau^{n,m}$.
Subtracting \eqref{weak-nh} from \eqref{eq:weakU} and using \eqref{eq:Pnh},
we get
\begin{equation}\label{eq:err}
\sum_{i=0}^4\lambda_i (\theta_h^{n-i,n},v_h)_{\Omega_\eta^n}
+ \tau\mathscr{A}_h^n(\theta_h^n,v_h)
=\tau(g^n,v_h)_{\Omega_\eta^n} + \tau\ell_n(v_h),
\end{equation}
where
$g^n= R^n+\tau^{-1}\sum_{i=0}^4\lambda_i\zeta^{n-i,n}$ and
$\ell_n(v_h)=(\tilde{f}^n-f^n,v_h)_{\Omega^n_\eta}
+\int_{\Gamma_\eta^n}(\gamma_0h^{-1}v_h-\partial_{\Bn} v_h)u^n$.
Choosing $v_h^n= 2\theta_h^{n}-\Cp_h^n \Theta_h^{n-1,n}$ in \eqref{eq:err} and using
\eqref{uh-stab} and \eqref{init-err}, we obtain
\begin{align}\label{theta-stab}
\NLtwo[\Omega_\eta^m]{\theta_h^m}^2 +
\tau\sum_{n=4}^m \TN{\theta_h^n}_{\mathcal{T}_h^n}^2
\lesssim C_0\tau^8 + \sum_{n=4}^m\tau\left[\NLtwo[\Omega^n_\eta]{g^n}^2
+\SN{\ell_n(v_h^{n})}\right].
\end{align}
It suffices to estimate $g^n$ and $\ell_n(v_h^{n})$ on the right-hand side of \eqref{theta-stab}.

Applying Taylor's formula to $R^n$ yields
\begin{align*}
&\NLtwo[\Omega_\eta^n]{R^n}^2 =
		\int_{\Omega_\eta^n}\bigg|
		\sum_{i=1}^4 \frac{\lambda_i}{4!\tau}
		\int_{t_{n-i}}^{t_n} (t_n-t)^4
		\frac{\D^5}{ \D t^5}u(\BX(t;t_n,\Bx),t)\D t\bigg|^2\D\Bx
		\lesssim \tau^7\N{u}_{H^5(D\times (t_{n-4},t_n))}^2.
\end{align*}
Let $\Ci_h u^{n-i}\in V_h$ be the Scott-Zhang interpolation of $u^{n-i}$ and define
$\rho^{n-i}_h=\Ci_h u^{n-i}-\Cp^{n-i}_hu^{n-i}$. From \eqref{ieq:rhon}, we obtain
\ben
\N{\rho_h^{n-i}}_{\Ltwo[\Omega^{n-i}_h]} \le \N{\rho^{n-i}}_{\Ltwo[\Omega^{n-i}_h]}
+\N{u^{n-i}-\Ci_h u^{n-i}}_{\Ltwo[\Omega^{n-i}_h]}\lesssim h^5\|u^{n-i}\|_{H^5(D)}.
\een
Moreover, combining Lemma~\ref{lem:Uh} and Lemma~\ref{lem:norm-eq} shows
\begin{align*}
\NLtwo[\Omega^n_\eta]{\rho^{n-i}\circ \BX_\tau^{n,n-i}}
\lesssim\,& \NLtwo[\Omega^{n-i}_h]{u^{n-i}-\Ci_h u^{n-i}}
    +\NLtwo[\Omega^n_\eta]{\rho_h^{n-i}\circ \BX_\tau^{n,n-i}} \\
\lesssim\,& h^5\|u^{n-i}\|_{H^5(D)} + \dN{\rho_h^{n-i}}_{\Ltwo[\Omega^{n-i}_\eta]}
+\tau^{2.5}\NLtwo[\Omega^{n-i}_h]{\rho_h^{n-i}}\\
\lesssim\,& \tau^5\|u^{n-i}\|_{H^5(D)}.
\end{align*}
Using \eqref{eq:Xerr} and Lemma~\ref{lem:uX}, we get
$\NLtwo[\Omega_\eta^n]{\zeta^{n-i,n}}
\lesssim\tau^5\|u^{n-i}\|_{H^{5}(D)}$; and from \eqref{ieq:extension}, we have
\begin{align}\label{est-gn}
\sum_{n=4}^m\tau\NLtwo[\Omega^n_\eta]{g^n}^2 \lesssim \tau^8\big(
\N{u}_{L^\infty(0,T;H^5(\Omega_t))}^2 + \N{u}^2_{H^5(Q_T)}\big) .
\end{align}

Next we estimate $\ell_n(v_h^{n})$.
Using Lemma~\ref{lem:v-vh} and the identity $\tilde f^n=f^n$
in $\Omega_{t_n}$, we find that
\begin{equation}\label{ieq:fn}
\big|(\tilde{f}^n-f^n,v_h^n)_{\Omega^n_\eta}\big|
=\big|(\tilde{f}^n-f^n,v_h^n)_{\Omega^n_\eta\backslash \Omega_{t_n}}
\lesssim\tau^{4.5}\|\tilde{f}^n -f^n\|_{H^1(\Omega^n_\eta)}
\NLtwo[\Omega^n_\eta]{v_h^n}.
\end{equation}
Since $u^n=0$ on $\Gamma_{t_n}$, Taylor's formula and Lemma~\ref{lem:bdr} indicate
\begin{align*}
\NLtwo[\Gamma_{\eta}^n]{u^n}^2 =
\int_0^{L_0} (u^n\circ\chibf_n-u^n\circ\hat\chibf_n)^2
\SN{\chibf_n'}
\lesssim  \tau^{10}\SN{u^n}_{H^1(D)}^2.
\end{align*}
Using inverse estimate, we obtain
\begin{align}\label{ieq:ln}
\SN{\ell_n(v^n_h)} \lesssim \tau^{4.5}\|\tilde{f}^n -f^n\|_{H^1(\Omega^n_\eta)}
\NLtwo[\Omega^n_\eta]{v_h^n}
+\tau^4\N{u^n}_{H^1(D)}\NLtwo[\Gamma^n_\eta]{v_h^n}.
\end{align}

The techniques for estimating $v_h^n=2\theta_h^{n}-\Cp_h^n \Theta_h^{n-1,n}$ are similar to \eqref{stab-est2}. They use Lemma~\ref{lem:norm-eq}, \eqref{ieq:Rh-stab}, \eqref{Rw-1}, Lemma~\ref{lem:uX}, and Lemma~\ref{lem:Uh}. We omit the details and just present the results
\begin{align*}
\N{v_h^n}_{\Ltwo[\Omega^{n}_h]}^2
\lesssim\,& \N{\theta^n_h}_{\Ltwo[\Omega^{n}_\eta]}^2
+\N{\theta^{n-1}_h}_{\Ltwo[\Omega^{n-1}_\eta]}^2
+h^2\tN{\theta^{n}_h}^2_{\Ct^{n}_h}
+h^2\tN{\theta^{n-1}_h}_{\Ct^{n-1}_h}^2,   \\
\N{v_h^n}^2_{\Ltwo[\Gamma^n_\eta]}
\lesssim\,& h\mathscr{J}^n_0(\theta^n_h,\theta^n_h)
+h\tN{\Theta^{n-1,n}_h}_{\Omega^n_\eta}^2
\lesssim h\tN{\theta^{n}_h}_{\Ct^n_h}^2
+h\tN{\theta^{n-1}_h}_{\Ct^{n-1}_h}^2.
\end{align*}
Inserting the estimates into \eqref{ieq:fn} and \eqref{ieq:ln} yields
\begin{align}\label{est-ln}
\sum_{n=4}^m\tau\SN{\ell_n(v^n_h)} \lesssim \tau^8 M_u
+ \tau^2\sum_{n=3}^m\left(\N{\theta^n_h}_{\Ltwo[\Omega^{n}_\eta]}^2
+ \tN{\theta^{n}_h}_{\Ct^n_h}^2 \right).
\end{align}
Finally, we insert \eqref{est-gn}--\eqref{est-ln} into \eqref{theta-stab} and let $\tau$ be small enough. This yields
\begin{align}\label{theta-err}
\NLtwo[\Omega_\eta^m]{\theta_h^m}^2 +
\tau\sum_{n=4}^m \TN{\theta_h^n}_{\mathcal{T}_h^n}^2
\lesssim (C_0+M_u)\tau^8.
\end{align}
The proof is finished by combining \eqref{ieq:rhon} and \eqref{theta-err}.
\end{proof}

\section{Numerical experiments}
\label{sec:num}

Now we use two numerical experiments to verify the convergence order of the UCFEM. The exact solution
is set by $u(\Bx,t) = e^{-t}\sin(\pi x_1)\sin(\pi x_2)$.
To simplify the computation, we set the pre-calculated initial values by the exact solution, namely, $u^j_h=u(t_j)$, for $0\le j\le 3$.
Throughout the section, we set $\gamma_0=800$, $\gamma_1=1/\gamma_0$, and $\eta \leq 0.5\tau$.
The approximation error is measured with the quantity
$e^N=\big[\|u(\cdot,T)-u_h^{N}\|_{L^2(\Omega_\eta^N)}^2
+\sum_{n=4}^N\tau\SNHone[\Omega_\eta^n]{u-u_h^n}^2\big]^{1/2}$.

\subsection{A rotating elliptic ring}

This example is to test the robustness and optimal convergence of the UCFEM for rigid motions.
The velocity is set by $\Bw = (0.5-x_2,x_1-0.5)^\top$.
The initial domain $\Omega_0$ is an elliptical ring centered at $(0.5,0.5)$. The inner boundary is an ellipse with major axis equal to 0.22 and minor axis equal to 0.1. The outer boundary is an ellipse with major axis equal to 0.3 and minor axis equal to 0.15. The ring has rotated half a circle counterclockwise at $T=\pi$.

\begin{figure}[http!]
	\centering
\vspace{-2mm}
	\includegraphics[width=0.18\textwidth]{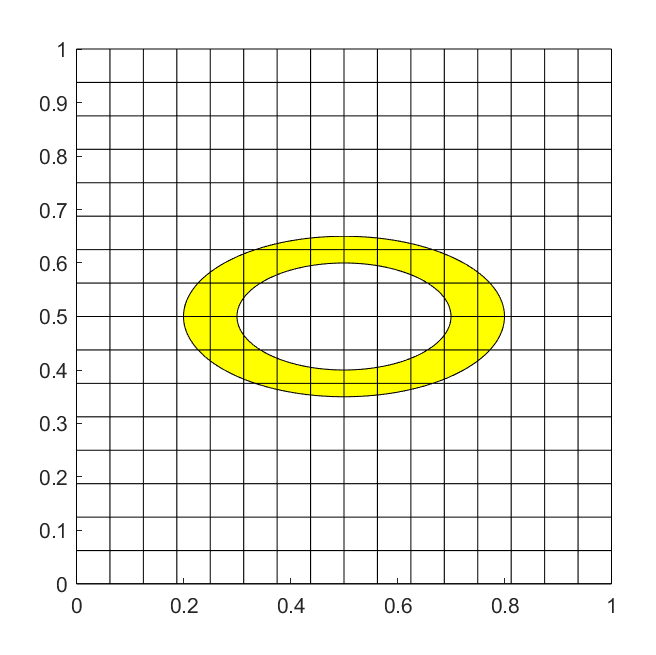}
	\includegraphics[width=0.18\textwidth]{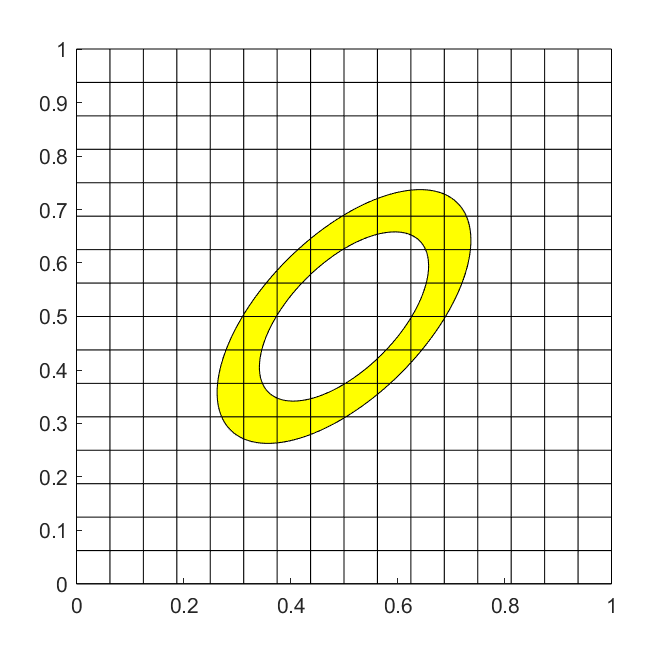}
	\includegraphics[width=0.18\textwidth]{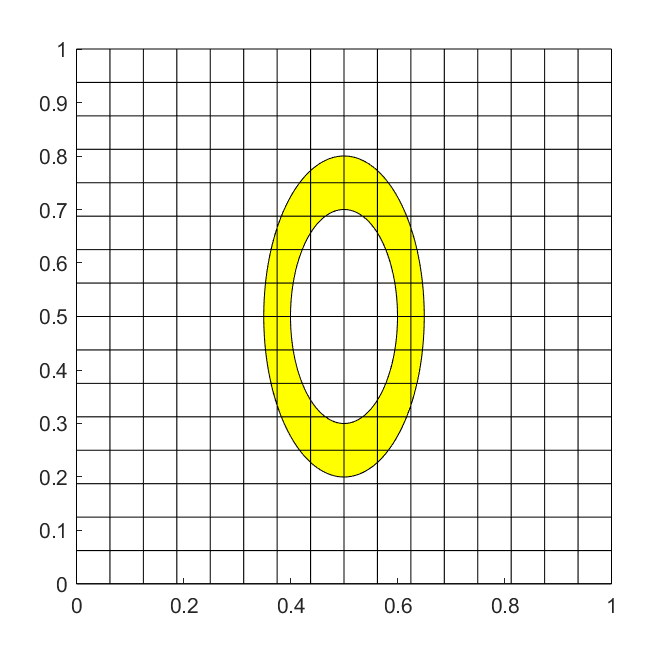}
	\includegraphics[width=0.18\textwidth]{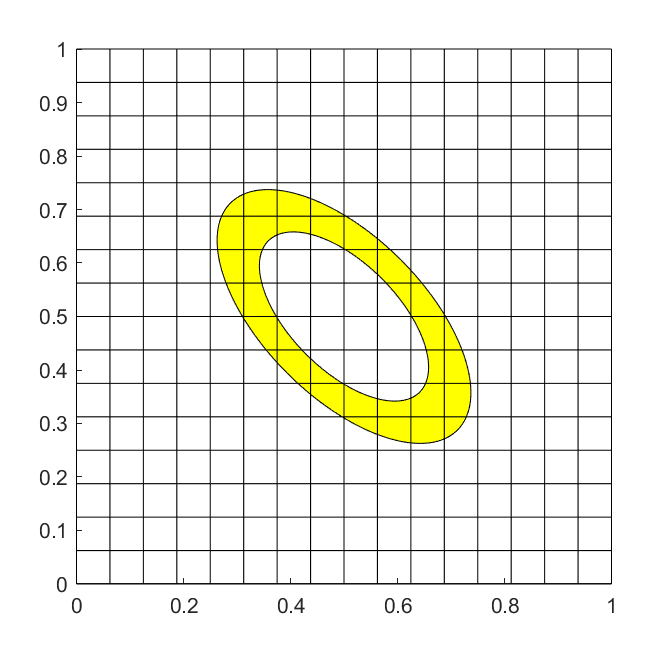}
\vspace{-2mm}
\caption{The approximate domains at $t_n=0,\;\pi/4,\;\pi/2$, and $3\pi/4$, respectively ($h=2^{-4}$).}	\label{fig:rotation} \vspace{-8mm}
\end{figure}

{\linespread{1.1}
\begin{table}[http!]
	\center
\vspace{-3mm}
\caption{Convergence orders for Example~1 ($h=\tau/\pi$).}
\label{tab:resul rotation} \vspace{-3mm}
\setlength{\tabcolsep}{3mm}
\begin{tabular}{ccc|ccc}
		\hline
$h$       &$e^N$     &order  &$h$       &$e^N$     &order \\  \hline
$2^{-4}$  &2.98e-06  &---      &$2^{-6}$  &1.67e-08  &3.86  \\
$2^{-5}$  &2.43e-07  &3.62   &$2^{-7}$  &1.09e-09  &3.93  \\  \hline
\end{tabular}
\vspace{-1mm}
\end{table}}

Since the rigid motion of the ring is an isometry,
in Algorithm~\ref{alg:spline} there is no need to redistribute the
markers on the boundary.
The interface tracking error only comes from cubic spline interpolations.
Our method can achieve good accuracy even on coarse meshes.
Table~\ref{tab:resul rotation} shows that the optimal convergence $e^N \sim\tau^4$ is obtained asymptotically.

\subsection{Vortex flow}

The second example is to test the robustness and optimal convergence of the UCFEM for severely deformed domains.
The initial domain $\Omega_0$ is the disk whose radius is 0.15 and center is $(0.5,\,0.75)$.
The driving velocity is set by
\ben
\Bw =\cos(\pi t/4) \left(\sin^2 (\pi x_1) \sin (2\pi x_2),
	-\sin^2 (\pi x_2 )\sin (2\pi x_1)\right)^\top.
\een
At time $T=2$, $\Omega_T$ is stretched into a snake-like domain (see Fig.~\ref{fig:Omegat}).
In this example, $\Gamma_{t}$ suffers a large deformation
and yields a local $C^1$-discontinuity at the final time. To guarantee the
high accuracy of the method, we use Algorithm~\ref{alg:spline} to track the boundary and adjust the set of markers dynamically in time.
Since the exact solution is analytic, we can still observe optimal convergence as shown in Tables~\ref{tab:conv}.

\begin{figure}[http!]
	\centering
	\includegraphics[width=0.18\textwidth]{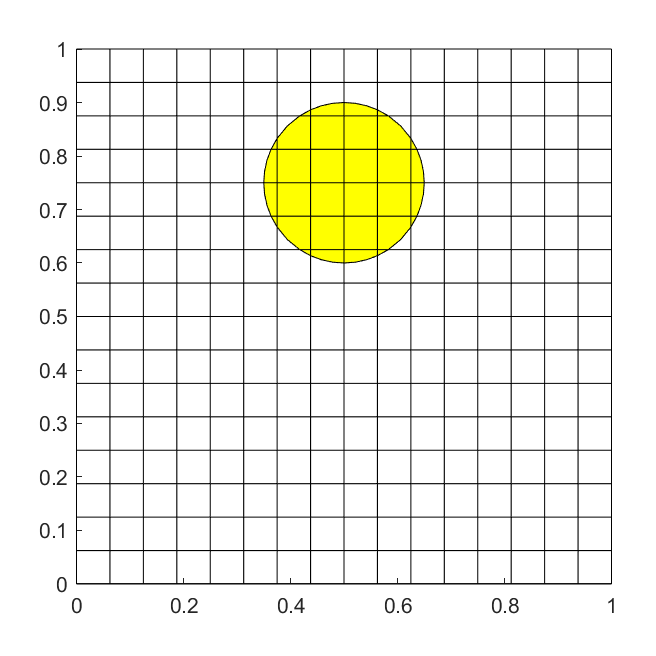}
	\includegraphics[width=0.18\textwidth]{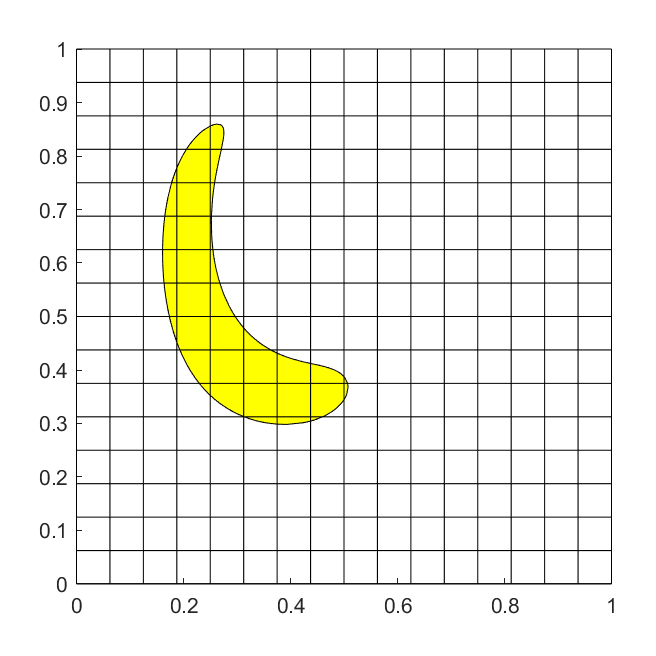}
	\includegraphics[width=0.18\textwidth]{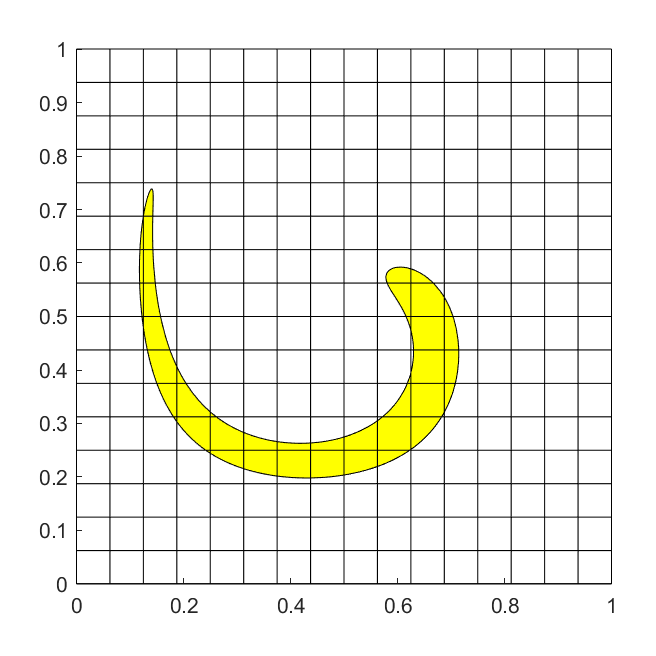}
	\includegraphics[width=0.18\textwidth]{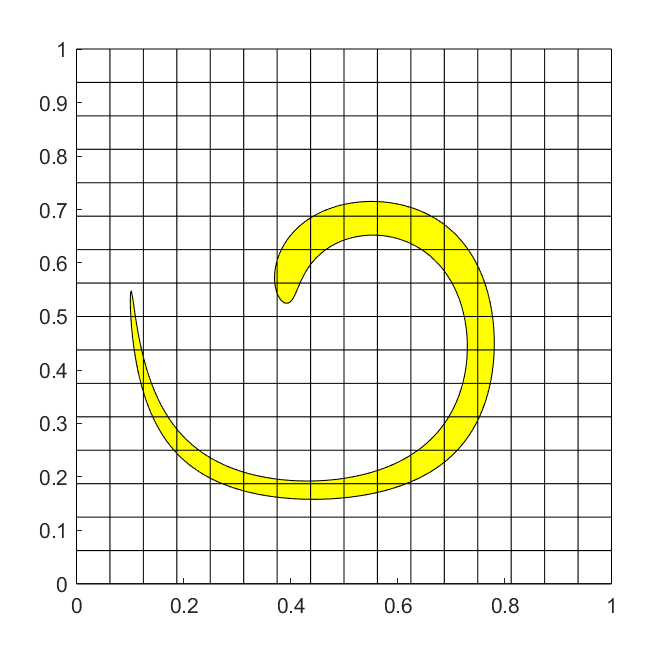}	
	\caption{The approximate domains $\Omega^n_\eta$ at $t_n=0,\;1/2,\;1$, and $2$, respectively ($h=2^{-4}$).}	\label{fig:Omegat}
\vspace{-10mm}
\end{figure}

{\linespread{1.1}
\begin{table}[http!]
	\center
\setlength{\tabcolsep}{3mm}
\vspace{-2mm}
\caption{Convergence orders for Example~2 ($h=\tau$).} \label{tab:conv}
\vspace{-3.5mm}
\begin{tabular}{ccc|ccc}
		\hline
$h$       &$e^N$     &order  &$h$       &$e^N$     &order  \\  \hline
$2^{-4}$  &2.43e-06  &---      &$2^{-6}$   &4.56e-09  &4.44   \\
$2^{-5}$  &9.90e-08  &4.62   &$2^{-7}$   &2.34e-10  &4.29   \\ \hline
\end{tabular}
\vspace{-5mm}
\end{table}}

\vspace{2mm}	

\begin{appendix}

\section{Estimates of $v\circ\BX^{n,n-i}$ and $v\circ\BX_\tau^{n,n-i}$} \label{sec:app}
In this appendix, we prove some useful estimates
for $v\circ\BX^{n,n-i}$ and $v\circ\BX^{n,n-i}_\tau$ with $0\le i\le 4$.

\begin{lemma}\label{lem:uX}
Suppose $h=O(\tau)=O(\eta^{4/5})$ and $\Omega\cup\BX^{n,n-i}_\tau(\Omega)\cup \BX^{n,n-i}(\Omega)\subset D$. There exists an $h_0>0$ such that, for any $h\in (0,h_0]$, $v\in H^1(D)$, and $v_h\in V_h$,
\begin{align}
 &\NLtwov[\Omega]{\nabla^\mu
 (v\circ\BX^{n,n-i}_\tau)}^2
 \le (1+C\tau)
 \NLtwov[\BX^{n,n-i}_\tau(\Omega)]{\nabla^\mu v}^2,
 \qquad \mu=0,1, \label{eq:v-X0}\\
 &\NLtwo[\Omega]{v\circ\BX^{n,n-i} -v\circ\BX^{n,n-i}_\tau}
  \lesssim \tau^{6}\SNHone[D]{v},    \label{eq:v-X1} \\
&\SNHone[\Gamma^n_\eta]{v_h\circ\BX_\tau^{n,n-i}}^2
 \lesssim h^{-1}\SNHone[\Omega^{n-i}_h]{v_h}^2,	\label{eq:v-X3} \\
&\NLtwo[\Gamma^n_\eta]{v_h\circ\BX_\tau^{n,n-i}}^2
 \le (1+C\tau)\NLtwo[\Gamma^{n-i}_\eta]{v_h}^2
 +C\tau^4\NHone[\Omega^{n-i}_h]{v_h}^2. \label{eq:v-X2}
\end{align}
\end{lemma}
\begin{proof}
Inequality \eqref{eq:v-X0} is obtained directly by changing variables of integration and using \eqref{eq:Jnni}. Inequality
\eqref{eq:v-X1} is a direct consequence of \eqref{eq:Xerr}. Moreover, \eqref{eq:v-X3} can be proven easily by using scaling arguments, norm equivalence, and the results in \eqref{eq:Jnni}.

To prove \eqref{eq:v-X2}, we note that
\begin{align*}
\NLtwo[\Gamma^n_\eta]{v_h\circ\BX_\tau^{n,n-i}}^2
= \int_0^{L_0} \SN{v_h\circ \chibf_{n-i}}^2\SN{\chibf_n'} + \int_{0}^{L_0}
\big(\SN{v_h\circ \BX_\tau^{n,n-i}\circ \chibf_{n}}^2-
	 \SN{v_h\circ \chibf_{n-i}}^2\big)
	 \SN{\chibf_{n}'}.
\end{align*}
Remember from \eqref{tdomain} that
$\min\limits_{\Bx\in\partial\Omega^{m}_h}\mathrm{dist}(\Bx,\Gamma^{m}_\eta) \ge h/2$ for any $m>0$. Then, for $h$ small enough,
\begin{equation}\label{domain-nni}
\BX_\tau^{n,n-i}(\Omega^{n}_\eta)\subset\Omega^{n-i}_h,\qquad
\BX_\tau^{n-i,n}(\Omega^{n-i}_\eta)\subset\Omega^{n}_h.
\end{equation}
By Theorem~\ref{thm:chi-err} and the relation $\eta=O(\tau^{5/4})$, it is easy to see that
\begin{equation}\label{err-chi}
\begin{cases}
\N{\chibf_n-\BX_\tau^{n-i,n}\circ \chibf_{n-i}}_{\BC([0,L_0])}
\lesssim \tau^{6},    \vspace{1mm}\\
\N{\BX_\tau^{n,n-i}\circ \chibf_n-\chibf_{n-i}}_{\BC([0,L_0])} \lesssim
\N{\chibf_n-\BX_\tau^{n-i,n}\circ \chibf_{n-i}}_{\BC([0,L_0])}\lesssim \tau^{6}.
\end{cases}
\end{equation}
With \eqref{eq:Jnni}, it is standard to derive $\SN{\chibf_{n-i}'} \le\SN{\chibf_{n}'}+C\tau$ and
$\SN{\chibf_n'}\SN{\chibf_{n-i}'}^{-1} \leq 1+C\tau$. So we have
\begin{align}\label{vh-est1}
\int_{0}^{L_0} \SN{v_h\circ \chibf_{n-i}}^2  \SN{\chibf_{n}'}
\le (1+C\tau) \int_{0}^{L_0} \SN{v_h\circ \chibf_{n-i}}^2  \SN{\chibf_{n-i}'}
=(1+C\tau)\NLtwo[\Gamma^{n-i}_\eta]{v_h}^2.
\end{align}
Using Taylor's formula, inequality \eqref{err-chi}, and norm equivalence on each element, we also have
\begin{align}
\int_0^{L_0}\left(
\SN{v_h\circ\BX_\tau^{n,n-i}\circ\chibf_n}^2
-\SN{v_h\circ\chibf_{n-i}}^2\right) \SN{\chibf_{n}'}
\lesssim \tau^{6} h^{-2}\NHone[\Omega^{n-i}_h]{v_h}^2.
\label{vh-est2}
\end{align}
We obtain \eqref{eq:v-X2} from \eqref{vh-est1} and \eqref{vh-est2}.
\end{proof}

\begin{lemma}\label{lem:Uh}
Suppose $h=O(\tau)=O(\eta^{4/5})$.
There is a constant $C$ independent of $\tau$ such that
\begin{equation*}
\dN{\nabla^\mu(v_h\circ\BX^{n,n-i}_\tau)}_{\Ltwov[\Omega^n_\eta]}^2 \le(1+C\tau)
\dN{\nabla^\mu v_h}_{\Ltwov[\Omega^{n-i}_\eta]}^2
+C\tau^5 \NLtwo[\Omega^{n-i}_h]{\nabla^\mu v_h}^2,\quad
\mu=0,1 .
\end{equation*}
\end{lemma}
\begin{proof}
Clearly $\Omega^n_\eta\subset \BX^{n-i,n}_\tau(\Omega^{n-i}_\eta)\cup D^n_\eta$ where
$D^n_\eta:=\Omega^n_\eta\backslash\BX^{n-i,n}_\tau(\Omega^{n-i}_\eta)$ is the narrow strip between $\Gamma^n_\eta$ and $\BX^{n-i,n}_\tau(\Gamma^{n-i}_\eta)$.
Similarly, $\BX^{n,n-i}_\tau(D^n_\eta)$ is the narrow strip between
$\BX^{n,n-i}_\tau(\Gamma^n_\eta)$ and $\Gamma^{n-i}_\eta$.
From \eqref{domain-nni} and \eqref{err-chi}, we have
$\BX^{n,n-i}_\tau(D^n_\eta)\subset\Omega^{n-i}_h$ and
$\hbox{area}\left[K\cap \BX^{n,n-i}_\tau(D^n_\eta)\right]
\lesssim \tau^{6}h$ for any $K\in\Ct_h$.
The scaling argument shows that
\begin{align}
\dN{v_h\circ\BX^{n,n-i}_\tau}^2_{\Ltwo[D^n_\eta]}
\lesssim \sum_{K\cap \BX^{n,n-i}_\tau(D^n_\eta)\ne\emptyset}
h\tau^{6} \NLinf[K]{v_h}^2
\lesssim h^{-1}\tau^{6}\NLtwo[\Omega^{n-i}_h]{v_h}^2.
\label{eq:areErrvh}
\end{align}
Note that $\nabla(v_h\circ\BX^{n,n-i}_\tau)=\bbJ^{n,n-i}_\tau(\nabla v_h)\circ\BX^{n,n-i}_\tau$.
Using \eqref{eq:v-X0}, we have
\begin{align*}
\dN{v_h\circ\BX^{n,n-i}_\tau}_{\Ltwo[\Omega^n_\eta]}^2
\le (1+C\tau) \dN{v_h}^2_{\Ltwo[\Omega^{n-i}_\eta]}
+ h^{-1}\tau^{6}\NLtwo[\Omega^{n-i}_h]{v_h}^2.
\end{align*}
The proof for the case of $\mu=1$ is similar.
\end{proof}

\begin{lemma}\label{lem:Lh}
Suppose the conditions in Lemma~\ref{lem:Uh} hold. Let $w_n=\sum_{i=1}^4v_i\circ\BX^{n,n-i}_\tau$ and $w_{n-1}=\sum_{i=1}^4v_i\circ\BX^{n-1,n-i}_\tau$ with $v_i\in V_h$.
There is a constant $C>0$ independent of $\tau$ such that
\begin{equation*}
\N{w_n}_{\Ltwo[\Omega^n_\eta]}^2
\le(1+C\tau)\N{w_{n-1}}_{\Ltwo[\Omega^{n-1}_\eta]}^2
+C\tau^5\big(\NLtwo[D]{v_1}^2+\cdots +\NLtwo[D]{v_4}^2\big).
\end{equation*}
\end{lemma}
\begin{proof}
Note that $w_n=w_{n-1}\circ \BX^{n,n-1}_\tau$. The lemma can be proven by \eqref{eq:v-X0} and arguments similar to the proof of Lemma~\ref{lem:Uh}.
\end{proof}

\begin{lemma}\label{lem:v-vh}
Let Assumptions~\ref{ass-1} and \ref{ass-2} be satisfied. Suppose $h=O(\tau)=O(\eta^{4/5})$.
For any $v_h\in V_h$ and $v\in H^1(D)$,
\begin{align*}
\NLtwo[\Omega^n_\eta\backslash \Omega_{t_n} ]{v_h}^2
    \lesssim \tau^4\NLtwo[\Omega^n_h]{v_h}^2,\qquad
\NLtwo[\Omega^n_\eta\backslash \Omega_{t_n}]{v}^2
    \lesssim \tau^5\|v\|_{H^1(\Omega^n_\eta)}^2.
\end{align*}
\end{lemma}
\begin{proof}
The first inequality follows from Lemma~\ref{lem:bdr} and arguments similar to \eqref{eq:areErrvh}.
From \eqref{eq:Xerr} and \eqref{err-chi}, we know that $\text{dist}(\Gamma_\eta^n,\Gamma_{t_n})=O(\tau^5)$.
The second inequality is a direct consequence of \cite[Lemma~10 and (17)]{nic06}.
\end{proof}
\end{appendix}

\end{document}